\documentclass[11pt]{amsart}
\usepackage{verbatim}
\usepackage{amsmath}
\newtheorem{theorem}{Theorem}[section]

\newtheorem{prop}{Proposition}[section]

\newtheorem{lemma}{Lemma}[section]

\theoremstyle{definition}
{}

\theoremstyle{remark}

\newcommand{\C}{{\mathbb{C}}}

\newcommand{\ddc}{dd^c}
\DeclareMathOperator{\Tr}{Tr}
\numberwithin{equation}{section}
\usepackage{hyperref}

\usepackage[utf8]{inputenc}
\usepackage{mathtools,amssymb,amsthm}
\usepackage{mathrsfs}
\usepackage{microtype}
\usepackage{enumitem}
\newcommand{\al}{\alpha}

\newcommand{\ga}{\gamma}
\newcommand{\la}{\lambda}

\newcommand{\ep}{\epsilon}

\newcommand{\RR}{\mathbb{R}}
\newcommand{\CC}{\mathbb{C}}

\newcommand{\diag}{\mathrm{diag}}
\newcommand{\uv}{\underline{v}}
\newcommand{\ov}{\overline{v}}

\newcommand{\sB}{\mathcal{B}}
\newcommand{\Rm}{\mathrm{Rm}}

\DeclareMathOperator{\tr}{tr}
\DeclareMathOperator{\Ric}{Ric}

\DeclareMathOperator{\Herm}{Herm}
\DeclareMathOperator{\supp}{supp}
\DeclareMathOperator{\Lip}{Lip}

\hypersetup{colorlinks,citecolor=blue,plainpages=false,hypertexnames=false}
\begin{document}
\title{Positive bisectional curvature, Ricci mass, and the fundamental group}
\author[V. Datar]{Ved Datar}
\author[V. P. Pingali]{Vamsi Pritham Pingali}
\author[H. Seshadri]{Harish Seshadri}

\address{Department of Mathematics, Indian Institute of Science, Bengaluru, India}
\email{vvdatar@iisc.ac.in}
\email{harish@iisc.ac.in}
\email{vamsipingali@iisc.ac.in}
\begin{abstract} 
We prove the finiteness of the integral of the top power of the Ricci form on a complete, noncompact K\"ahler manifold with positive bisectional curvature. This settles a special case of a conjecture of Yau and Yang. An immediate corollary is the finiteness of the fundamental group of such manifolds.  As in our previous work, the main idea involves the construction and use of plurisubharmonic weight functions with finite Monge-Amp\`ere masses. 
\end{abstract}
\maketitle
\section{Introduction}\label{sec:intro}
A question of Yang \cite{Yang}, motivated by a conjecture of Yau \cite{Yau} for Riemannian manifolds, asks if 
$$\int_X Ric_{\omega}^n <\infty$$ for any complete noncompact K\"ahler $n$-manifold $(X, \omega)$ with nonnegative Ricci curvature. 

In a previous work \cite{DPS3}, we had answered this question affirmatively for K\"ahler manifolds with 
positive sectional curvature. In this paper, we extend the finiteness result to manifolds with positive holomorphic bisectional curvature, which we denote by $BK>0$.

\begin{theorem}\label{thm:finitericciintegral}
Let $(X,\omega)$ be a noncompact complete K\"ahler $n$-manifold with positive bisectional curvature. Then $$ \int_X Ric_{\omega}^n <\infty.$$
\end{theorem}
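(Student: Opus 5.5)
Since $BK>0$ implies $\Ric_\omega>0$, the form $\Ric_\omega$ is itself a K\"ahler form, and $\int_X \Ric_\omega^n$ is the Monge--Amp\`ere mass of the local potential $-\log\det g$, which need not be global. Following our earlier work \cite{DPS3}, I plan to dominate $\Ric_\omega$ by $\ddc$ of a global plurisubharmonic exhaustion (or weight) function $\varphi$ of controlled growth. I then want to show that $\int_X (\ddc\varphi)^n<\infty$ and deduce the finiteness of $\int_X\Ric_\omega^n$ by a comparison argument.

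\textbf{Step 1: the weight.} On a complete noncompact K\"ahler manifold with $BK>0$, the Busemann-type function, or the solution of the Poincar\'e--Lelong equation $\ddc u=\Ric_\omega$ (available when the scalar curvature decays suitably, as in Ni--Shi--Tam), gives a global potential. In general no decay is assumed. I would instead use a Busemann function $b$, which is plurisubharmonic under $BK\ge0$ by Wu's theorem. Composing gives $\varphi=\chi(b)$ with $\chi$ convex and slowly growing, for instance $\chi(t)=\log(1+t)$ or a bounded convex function. If $\varphi$ is bounded, or grows like $\log r$, then its Monge--Amp\`ere mass is finite by a Chern--Levine--Nirenberg/Bedford--Taylor type argument. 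That argument integrates by parts on sublevel sets and uses that the boundary terms involve only $|\nabla\varphi|$ and the pushforward masses.

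\textbf{Step 2: the comparison.} I would show that $\Ric_\omega\le C\,\ddc\psi$ on $X$, where $\psi$ is built from $\varphi$ together with $\log\det g$ or the scalar curvature $S$. The key inequality I expect to use is of Li--Yau--Hamilton type for the heat flow, or the inequality $\ddc\log S\ge -\Ric$-type obtained from Mok's and Ni's work under $BK>0$. Alternatively, I would run the K\"ahler--Ricci flow $g(t)$ starting from $\omega$, which preserves $BK>0$ under bounded curvature by Shi. Note that $\Ric_{\omega}^n$ is a smooth positive $(n,n)$-form. I would then apply a mass comparison: for plurisubharmonic functions $u,v$ with $u\ge v-C$ near infinity, the mass of $u$ is at most that of $v$. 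Here $u$ is a local potential of $\Ric$ glued via the Poincar\'e--Lelong solution $u=\int G(x,y)S(y)\,dy$ whenever it converges. Finiteness would then follow from $\int(\ddc\varphi)^n<\infty$.

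\textbf{Main obstacle.} The hard step is producing a global potential for $\Ric_\omega$ with growth at most that of $\varphi$ without any curvature decay hypothesis. Under sectional curvature positivity we had convexity from Cheeger--Gromoll--Greene--Wu; under $BK>0$ only plurisubharmonicity survives. I would first try to handle the case where $S$ is not integrable against the Green's function by truncation. That means solving $\ddc u_R=\Ric$ on sublevel sets $\{\varphi<R\}$ with boundary data. I would then bound $\int_{\{\varphi<R\}}(\ddc u_R)^n$ uniformly in $R$ using the Bedford--Taylor comparison principle against $A\varphi$, where the constant $A$ comes from a uniform $C^0$ estimate for $u_R-A\varphi$ on the boundary. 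If that estimate fails, the fallback is to work with the K\"ahler--Ricci flow, where Cao's Harnack inequality gives the required control of $\Ric$ by $\ddc$ of the time-derivative potential.
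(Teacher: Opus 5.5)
Your proposal has a genuine gap, and it puts the difficulty in the wrong place. The missing ingredient is a plurisubharmonic weight whose Monge--Amp\`ere mass is both finite and positive. Step 1 does not produce one.

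The weights you suggest fail on their own terms. The function $\log(1+t)$ is concave, so $\log(1+b)$ need not be plurisubharmonic. A convex nondecreasing $\chi$ that is bounded on the range of a Busemann function must be constant, since that range contains $[0,\infty)$ (because $b(\ga(s))=s$).

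More importantly, the Chern--Levine--Nirenberg argument only gives local bounds $\int_{B(o,R)}(\ddc\varphi)^n\le C_R\Vert\varphi\Vert^n_{L^\infty(B(o,2R))}$. Here $C_R$ depends on the Hessian of a cutoff and on $\mathrm{Vol}(B(o,2R))$. With the cutoffs available here ($|\ddc\chi_R|\le C/R$), these bounds grow with $R$. Finiteness of mass for the Lelong class in $\CC^n$ comes from a global comparison with $\log|z|$, which has no analogue on $X$. Nothing in your construction prevents $\int_X(\ddc\varphi)^n=0$ either.

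Producing a psh $\phi$ with $\Lip(\phi)\le1$ and $0<\int_X(\ddc\phi)^n<\infty$ is precisely Theorem~\ref{thm:MA}, the main content of the paper. There the measure is prescribed: one solves $(\ddc\phi)^n=e^{-f}\omega^n$ with $0<\int_X e^{-f}\omega^n<\infty$, so finiteness and positivity come for free. The real work is the uniform bound $|\nabla\phi|\le1$ on the noncompact $X$. That bound is obtained through the uniformly elliptic truncated Bellman approximations $F_\ep(H(u))-\delta(u-u_0)=h$. The hypothesis $BK>0$ enters through the Bochner term $F_\ep^{i\bar j}R_{i\bar j a\bar b}u_au_{\bar b}\ge B|\nabla u|^2$, together with the choice $|\nabla h|\le B/2$.

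Step 2 is not carried out either. Without decay of $S$ the Green's function integral may diverge. The equation $\ddc u_R=\Ric_\omega$ on a sublevel set determines $u_R$ only up to pluriharmonic functions, so you cannot prescribe boundary data. The K\"ahler--Ricci flow fallback needs bounded curvature, which is not assumed.

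The mass comparison as you state it is also reversed. For functions tending to $+\infty$ at infinity, you need $u\le v+C$ to conclude that the mass of $u$ is at most that of $v$; compare $2\log(1+|z|^2)$ with $\log(1+|z|^2)$ on $\CC^n$.

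The paper never needs a global Ricci potential or a domination $\Ric_\omega\le C\,\ddc\psi$. Proposition~\ref{thm:phi}, from the earlier work based on Chen--Zhu, deduces $\int_X\Ric_\omega^n<\infty$ directly from the existence of a psh $\phi$ with $\Lip(\phi)\le1$ and $0<\int_X(\ddc\phi)^n<\infty$. So the obstacle you single out is bypassed, while the step you treat as routine is where all the work lies.
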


As a corollary, we have:

\begin{theorem}\label{thm:Finitepi1}

Let $(X,\omega)$ be a noncompact complete K\"ahler manifold with positive holomorphic bisectional curvature. Then the fundamental group of $X$ is finite.
\end{theorem}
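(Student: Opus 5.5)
The plan is to deduce Theorem \ref{thm:Finitepi1} from Theorem \ref{thm:finitericciintegral} by passing to the universal cover and comparing Ricci masses. Let $\pi:\tilde X\to X$ be the universal covering and $\tilde\omega=\pi^*\omega$. Then $(\tilde X,\tilde\omega)$ is a complete K\"ahler manifold, and since bisectional curvature is a local invariant it still has $BK>0$. Also $Ric_{\tilde\omega}=\pi^*Ric_\omega$.

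First we check that $\tilde X$ is noncompact, so that Theorem \ref{thm:finitericciintegral} applies to it. This is automatic: if $\tilde X$ were compact, then $X=\tilde X/\pi_1(X)$ would be compact, contradicting the hypothesis. (If $\pi_1(X)$ were infinite, $\tilde X$ would be noncompact for that reason as well.) Hence Theorem \ref{thm:finitericciintegral} gives
$$\int_{\tilde X} Ric_{\tilde\omega}^n<\infty .$$

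Next we use positivity of the integrand. $BK>0$ implies $Ric_\omega>0$, since the Ricci curvature in direction $v$ is a sum of bisectional curvatures $R(v,\bar v,e_i,\bar e_i)$. Therefore $Ric_\omega^n$ is a strictly positive volume form on $X$, so
$$\int_X Ric_\omega^n>0 .$$
This integral is finite by Theorem \ref{thm:finitericciintegral} applied to $X$ itself. Take a fundamental domain $F\subset\tilde X$, an open set whose boundary has measure zero, with $\pi|_F$ a diffeomorphism onto an open dense subset of $X$ of full measure. The deck translates $\gamma F$, $\gamma\in\pi_1(X)$, are disjoint and cover $\tilde X$ up to a null set. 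Since deck transformations preserve $\tilde\omega$, and hence preserve $Ric_{\tilde\omega}^n$, we get
$$\int_{\tilde X}Ric_{\tilde\omega}^n=\sum_{\gamma\in\pi_1(X)}\int_{\gamma F}\pi^*Ric_\omega^n=|\pi_1(X)|\int_X Ric_\omega^n .$$
The left side is finite and $\int_X Ric_\omega^n$ is a fixed positive number, so $|\pi_1(X)|<\infty$.

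There is no serious obstacle here beyond Theorem \ref{thm:finitericciintegral} itself, which carries all the analytic content. The only points to be careful about are the ones already handled above: that the hypotheses (completeness, noncompactness, $BK>0$) lift to the cover, and that $\int_X Ric_\omega^n$ is strictly positive rather than merely nonnegative. Both follow directly, the latter from $BK>0\Rightarrow Ric>0$.
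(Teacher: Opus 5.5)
Your proposal is correct and follows the paper's own argument: pass to the universal cover, apply Theorem \ref{thm:finitericciintegral} there, and get a contradiction if $\pi_1(X)$ is infinite. You also correctly supply the details the paper leaves implicit, namely that completeness, noncompactness and $BK>0$ lift, that $BK>0\Rightarrow Ric>0$ gives $\int_X Ric_\omega^n>0$, and that $\int_{\tilde X}Ric_{\tilde\omega}^n=|\pi_1(X)|\int_X Ric_\omega^n$ (only positivity of $\int_X Ric_\omega^n$ is needed, and an evenly covered open set $U$ giving $\int_{\tilde X}Ric_{\tilde\omega}^n\ge|\pi_1(X)|\int_U Ric_\omega^n$ would spare you the fundamental domain).
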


Indeed, if $\pi_1(X)$ is infinite, then passing to the universal cover and applying Theorem \ref{thm:finitericciintegral} we arrive at a contradiction. In particular, $\mathbb{C}\times \mathbb{C}^*$ does not admit a complete K\"ahler metric of positive bisectional curvature.  We recall that a conjecture of Yau \cite{Y} states that complete noncompact K\"ahler $n$-manifolds with $BK>0$ are biholomorphic to ${\mathbb C}^n$. However, it is not even known if such manifolds are simply-connected.

We now make some remarks on the proof of Theorem \ref{thm:finitericciintegral}: An earlier work of the authors \cite{DPS3} (based on work by Chen and Zhu \cite{CZ2}) establishes the following proposition implying finiteness of Ricci mass. 

\begin{prop}\label{thm:phi}
Let $(X,\omega)$ be a complete, noncompact K\"ahler $n$-manifold with $BK>0$. If there exists a Lipschitz plurisubharmonic function $\phi$ on $X$ satisfying $$\Lip(\phi) \leq 1 \text{ and } 0< \int_X(dd^c\phi)^n < \infty,$$ in the sense of Bedford-Taylor, then $$\displaystyle\int_X Ric_{\omega}^n <\infty.$$
\end{prop}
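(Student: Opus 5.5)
The plan is to compare the Ricci form with $dd^c$ of a suitable function via the Kähler--Ricci flow, following Chen--Zhu. First normalize $\phi$ so that $\phi\ge 0$ and $\phi(x_0)=0$, replacing $\phi$ by $\max(\phi,0)$ or by a smoothing if needed. Plurisubharmonicity and the Lipschitz bound give $\phi(x)\le d(x,x_0)$. The idea is to run the Kähler--Ricci flow $\partial_t g = -\Ric(g)$ from $\omega$; by Shi's existence theorem and the preservation of $BK>0$ (Bando, Mok, Shi), this yields a solution with bounded curvature on a short time interval. If the curvature of $\omega$ is unbounded, I would first use the Chen--Zhu approximation or work on exhaustions. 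The flow gives the identity
\[
\omega_t=\omega-t\,\Ric_\omega+O(t^2)\quad\text{in cohomology,}
\]
and, more importantly, $\Ric_\omega = dd^c F$ with $F=\log(\omega^n/\omega_t^n)/t$ in the appropriate averaged sense. So the aim is to express $\Ric_\omega$ as $dd^c u$ for a plurisubharmonic potential $u$, for instance $u=-\log\det$ relative to a flowed metric, or to construct $u$ by solving a Poisson equation $\Delta u = R$ on $X$ using the positive Green's function.

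Next, I would show that this potential satisfies a growth comparison $u\le C(1+\phi)$, or at least $u \le C\,(1+ d(x,x_0))$ up to a lower-order term. Under $BK>0$ the scalar curvature has a decay estimate along balls: Chen--Zhu and Ni prove $\frac{1}{V(B(x,r))}\int_{B(x,r)}R\le C/r$ type bounds. This yields $u(x)\le C\log(1+d)$ or $u\le C d$, and in particular $u=o(\phi)$ or $u\le C\phi+C$ off a compact set.

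The key step is a Bedford--Taylor comparison principle for Monge--Ampère masses of psh functions with comparable growth. If $u,\phi$ are locally bounded psh functions with $u\le \phi+C$ at infinity, then
\[
\int_X (dd^c u)^n\le \int_X (dd^c\phi)^n.
\]
For this I would apply the comparison principle on sublevel sets $\{\phi<s\}$, which are relatively compact since $\phi$ is exhaustive. Exhaustiveness requires an argument: $\int(dd^c\phi)^n>0$ and the Lipschitz bound must force $\phi\to\infty$, perhaps after modifying $\phi$ by adding $\ep\rho$ where $\rho$ is a Busemann-type psh exhaustion available under $BK>0$. Once comparison is in place, I would apply it to $\max(u,\ (1+\ep)\phi-C)$, which agrees with $(1+\ep)\phi - C$ near infinity, and let $\ep\to 0$. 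This bounds $\int (dd^c u)^n$ by $C^n\int(dd^c\phi)^n<\infty$. Since $dd^c u=\Ric_\omega$, the result follows. The positivity $\int(dd^c\phi)^n>0$ is used only to make the exhaustion and normalization work.

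The main obstacle is producing the potential $u$ with $dd^c u=\Ric_\omega$ together with linear growth controlled by $\phi$. Solving Poisson's equation needs $\int^\infty \frac{s}{V(B(x_0,s))}\int_{B(x_0,s)}R\,ds$ type integrability, which may fail. In that case I would instead use the time-$t$ flow potential $\log(\omega^n/\omega_t^n)$, with $t\sim d(x,x_0)^2$ localized via Perelman/Shi local estimates, as Chen--Zhu do. I would accept a growth bound $u\le C\phi+C'$ rather than sharp comparability, since the constant only enters through $C^n$.
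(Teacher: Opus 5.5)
The paper does not reprove this proposition; it imports it from \cite{DPS3}, which builds on Chen--Zhu \cite{CZ2}. Your sketch therefore has to stand on its own, and both of its load-bearing steps fail.

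First, you never actually produce a global potential $u$ with $dd^c u=\Ric_\omega$. The flow identity you write is incorrect. Since $\Ric_\omega=-dd^c\log\det(g_{i\bar j})$ in local coordinates, one has $dd^c\log(\omega^n/\omega_t^n)=\Ric_{\omega_t}-\Ric_\omega$. Moreover $\frac1t\log(\omega^n/\omega_t^n)=\frac1t\int_0^tR\,ds$ tends to the scalar curvature as $t\to0$, not to a Ricci potential. Choosing $t\sim d(x,x_0)^2$ pointwise destroys even this identity, because $dd^c$ then also falls on $t(x)$. From the flow, a Ricci potential appears only as a normalized $t\to\infty$ limit of $\log(\omega_t^n/\omega^n)$, which requires long-time existence and $\Ric_{\omega_t}\to0$. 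That route (Ni--Tam) and the Green's function route (Mok--Siu--Yau, Ni--Shi--Tam) both need the ball averages of $R$ to decay faster than $1/r$ in an integrable sense. Under $BK>0$ one has only Chen--Zhu's linear average decay, and this is sharp: Cao's steady soliton has $R\sim 1/r$. So the step you flag as the main obstacle is not a technicality. No such potential is available in this generality, and the proposal gives no way around it. Short-time existence of the flow when the curvature of $\omega$ is unbounded is a further point you leave unaddressed.

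Second, suppose you had a psh $u$ with $dd^c u=\Ric_\omega$ and $u\le C(1+d(x,x_0))$. Your comparison would still need $\phi\to+\infty$ and $u\le C\phi+C'$ near infinity, i.e.\ a \emph{lower} bound on the growth of $\phi$. The hypotheses give neither. $\Lip(\phi)\le 1$ yields only the upper bound $\phi\le\phi(x_0)+d(x,x_0)$, which is useless in this direction.
\begin{itemize}
\item A Lipschitz psh function with finite Monge--Amp\`ere mass typically grows logarithmically. Think of $\max(\log|z|,0)$ on $\C^n$; likewise the $\phi$ built in this paper solves $(dd^c\phi)^n=e^{-f}\omega^n$ with no growth control from below.
\item A Ricci potential typically grows linearly. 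On Cao's soliton, $\Ric=dd^cF$ with $F$ the soliton potential, and $F\sim d(x,x_0)$.
\end{itemize}
So $u\le C\phi+C'$ fails and the comparison principle gives nothing. Your claim that linear average decay of $R$ gives $u\le C\log(1+d)$ is also unfounded.

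The proposed repair does not work either:
\begin{itemize}
\item A Busemann function is not an exhaustion.
\item No psh exhaustion is known under $BK>0$ alone. The paper stresses that \cite{DPS1} had one only under positive sectional curvature, and the Bellman approximation is introduced precisely to avoid needing one.
\item Adding $\ep\rho$ with $\rho$ psh of linear growth can destroy finiteness of the mass. Indeed $(dd^c(\phi+\ep\rho))^n\ge\ep^n(dd^c\rho)^n$, and, for example, $\int_{|z|<r}(dd^c|z|)^n\sim r^n$ on $\C^n$.
\end{itemize}
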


In view of Proposition \ref{thm:phi} it is enough to produce a uniformly Lipschitz psh function with finite Monge-Amp\`ere mass:

\begin{theorem}\label{thm:MA}
Let $(X,\omega)$ be a complete, noncompact K\"ahler $n$-manifold with $BK>0$. There exists a Lipschitz plurisubharmonic function $\phi$ on $X$ satisfying $$\Lip(\phi) \leq 1 \text{ and } 0< \int_X(dd^c\phi)^n < \infty,$$ in the sense of Bedford-Taylor.
\end{theorem}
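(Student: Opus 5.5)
Our plan is to build $\phi$ as a limit of solutions of Monge–Ampère equations with controlled total mass, following the strategy of our earlier work \cite{DPS3}. The main change is that the sectional-curvature comparison geometry used there has to be replaced by tools available under $BK>0$ alone.

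\textbf{Step 1: a model weight.} Fix $p\in X$ and let $r=d(p,\cdot)$. Since $BK>0$ implies $\Ric>0$, Bishop–Gromov volume comparison holds. By the Greene–Wu/Siu–Yau picture, noncompact complete manifolds with $BK>0$ are Stein. Moreover, Chen–Zhu's Ricci flow or heat flow techniques give a smooth strictly psh exhaustion $u$ with $|\nabla u|\le C$ and $c\,r-C\le u\le C r+C$. Concretely, we would solve the heat equation with initial data $r$ (or run the Kähler–Ricci flow). By the Li–Yau–Hamilton inequality for $BK\ge 0$, the evolved function $u_t=\int H(x,y,t)\,r(y)\,dy$ satisfies $\ddc u_t\ge0$, and in fact $\ddc u_t>0$ by strict positivity. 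Its Lipschitz constant is at most $1$, because heat flow preserves Lipschitz bounds when $\Ric\ge0$ (Bakry–Émery gradient estimate). Thus $\Lip(u)\le1$, and $u$ is strictly psh and proper.

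\textbf{Step 2: finite mass by a maximal-type modification.} The function $u$ itself may have infinite Monge–Ampère mass, so we replace it by a relative extremal-type envelope. For $R$ large, let $\Omega_R=\{u<R\}$ and
\[
\phi_R=\sup\{v \text{ psh on } X:\ v\le u \text{ on } X,\ \Lip(v)\le1,\ v\le \max(u,\, R_0)\ \text{near }\infty\}.
\]
More naturally, we would take $\phi=\lim_{R\to\infty}(\text{pluricomplex Green-type envelopes})$, modeled on $\log$-growth. The key estimate is a comparison of $\int_{\{\phi<s\}}(\ddc\phi)^n$ in terms of $s$. By the Bedford–Taylor comparison principle and the Chern–Levine–Nirenberg inequality, $\int_{\{\phi<s\}}(\ddc\phi)^n$ is controlled by boundary flux integrals $\int_{\{\phi=s\}} d^c\phi\wedge(\ddc\phi)^{n-1}$. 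Using $\Lip\le1$, we would bound these by $\int_{\{\phi=s\}}|\nabla\phi|\,(\ddc\phi)^{n-1}\wedge\ldots$. We aim for a bound independent of $s$, by arranging $\phi$ to be maximal ($(\ddc\phi)^n=0$) outside a compact set. This would be done by solving the Dirichlet problem $(\ddc\phi_R)^n=0$ on $\Omega_R\setminus \overline{\Omega_{R_0}}$ with boundary data $u$, and passing to a limit using local Lipschitz bounds.

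\textbf{Step 3: positivity and Lipschitz control in the limit.} Positivity of the mass holds because $\phi=u$ on $\Omega_{R_0}$, where $u$ is strictly psh. Lipschitz bounds for the maximal solutions should follow from the barrier $u$ together with the fact that $u$ is a subsolution. A Bedford–Taylor-type argument comparing $\phi_R(x)$ with $\phi_R(\text{translated})$ is unavailable, though, because $X$ has no group of automorphisms. So we would instead obtain gradient bounds via a maximum principle for $|\nabla\phi|^2$, using $BK>0$ in the Bochner formula for psh functions.

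\textbf{Main obstacle.} We expect the hardest part to be the uniform Lipschitz bound $\Lip(\phi_R)\le1$ for the degenerate maximal solutions together with nondegeneration of the limit. The limit could become $-\infty$ or constant; to rule this out we would use the sublinear-growth barrier $u$ from below and $\max(u,R_0)$-type functions from above. If the direct gradient estimate fails, our fallback is to smooth by $\epsilon\omega$-perturbed equations $(\ddc\phi)^n=\epsilon\, e^{\phi}\omega^n$ and extract estimates uniform in $\epsilon$.
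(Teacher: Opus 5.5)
There is a genuine gap, and it sits exactly where the paper needs its new idea.

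Your Step 1 assumes that a complete noncompact Kähler manifold with $BK>0$ is Stein and carries a smooth strictly psh exhaustion $u$ with $\Lip(u)\le 1$ and linear growth. This is not available under $BK>0$ alone. The Greene--Wu Steinness theorem needs positive \emph{sectional} curvature, which is what the authors' earlier work used to get its exhaustion. Results of Chen--Zhu or Ni--Tam type need extra hypotheses such as bounded curvature, curvature decay or volume growth. Your concrete recipe does not produce such a $u$ either:
the distance function $r$ is not psh in general; heat flow under $BK\ge0$ \emph{preserves} plurisubharmonicity but does not create it; and the Li--Yau--Hamilton inequality is a one-sided estimate for $\log$ of positive solutions, not plurisubharmonicity of $u_t$.
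What $BK>0$ actually gives is the heat-flowed Busemann function $u_0=P_1b$. It is strictly psh with $|\nabla u_0|\le 1$ and bounded complex Hessian, but it is \emph{not proper}. So it yields no relatively compact pseudoconvex domains $\Omega_R$ on which to pose Dirichlet problems for $(dd^c\cdot)^n$. The paper is organized around exactly this. It works on sublevel sets $\{\rho<c_j\}$ of a non-psh exhaustion, which need not be pseudoconvex. This is legitimate only because the smoothed truncated Bellman operators $F_\ep$ are concave and uniformly elliptic, so no pseudoconvexity or boundary barriers are required.

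Even granting such a $u$, Step 2 cannot deliver $\Lip(\phi)\le 1$. A function maximal on $\Omega_R\setminus\overline{\Omega_{R_0}}$ with boundary values $u$ on both components grows logarithmically. Already in the flat model with $u=|z|$ it is $R_0+(R-R_0)\log(|z|/R_0)/\log(R/R_0)$. Its gradient on $\{|z|=R_0\}$ is of order $R/(R_0\log R)\to\infty$, and it tends to $+\infty$ pointwise as $R\to\infty$. Your suggested justification points the wrong way: $u$ is a subsolution, so $\phi_R\ge u$, and at $\partial\Omega_{R_0}$ this bounds the slope of $\phi_R$ only from \emph{below}. An upper bound would need a maximal, Green-function-type supersolution, which is exactly what is not available. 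A Bochner-type maximum principle using $BK>0$ merely pushes $\sup|\nabla\phi_R|^2$ to the boundary, where it is large. Your alternative envelope $\sup\{v\le u\}$ just returns $u$.

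The missing idea is to obtain finite mass by \emph{prescribing} it rather than through maximality. The paper solves $(dd^c\phi)^n=e^{-f}\omega^n$ through $F_\ep(H(u))-\delta(u-u_0)=h$, where $h=e^{-f/n}$ satisfies: $h$ bounded, $\int_X e^{-f}\omega^n<\infty$, $h<\tfrac12(\det H(u_0))^{1/n}$ and $|\nabla h|\le B/2$. The B\l{}ocki-type gradient estimate is then carried out on all of $X$, with cutoffs and $\rho$ used only for localization. There $BK>0$ supplies the term $B|\nabla u|^2$, which absorbs $2\Re(h_a u_{\bar a})$ and gives $|\nabla u|\le 1$ uniformly. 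The limits $\delta\to0$, $\ep\to0$ then yield a Bedford--Taylor solution with $0<\int_X(dd^c\phi)^n=\int_X e^{-f}\omega^n<\infty$. Your fallback $(dd^c\phi)^n=\epsilon e^{\phi}\omega^n$ runs into the same solvability problem and gives no control of the total mass.
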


In \cite{DPS1}, positive  sectional curvature was used to obtain a smooth strictly plurisubharmonic (psh) exhaustion. This was employed to produce a Lipschitz function with finite Monge-Amp\`ere mass.  The exhaustion function was needed to provide barrier functions to solve the Monge-Amp\`ere equation and for the gradient estimate on the boundary. The key new insight (due to ChatGPT Astra Pro) is to ``approximate" the Monge-Amp\`ere equation by \emph{uniformly elliptic equations} using (truncated) Bellman operators. Nonlinear uniformly elliptic equations can be solved without undue boundary/barrier requirements. 

\subsection*{Ongoing work:} We can also prove Yau's uniformisation conjecture in full in complex dimension two, with substantial assistance from AI, and are currently writing up the proof. We hope to announce this result soon. OpenAI has recently announced a claimed proof of the conjecture in all dimensions, but its approach is quite different from ours. Our proof further develops the methods introduced in our previous work and in the present paper, and we believe these ideas are of independent interest and merit a separate exposition. We also note that the main theorem of the present paper, i.e., Theorem  \ref{thm:finitericciintegral} remains new even in light of the recent OpenAI announcement. 

\subsection*{Acknowledgements}: The second-named author Pingali is grateful to ANRF for support via a MATRICS grant (ANRF/ARGM/2025/000313/MTR). The authors also acknowledge partial support by the DST FIST program-2021
[TPN-700661].\\

\subsection*{AI usage:} ChatGPT Astra Pro was used extensively, over several conversations, in exploring possible approaches to the proof of the main theorem. Its main useful contribution was to suggest the use of truncated Bellman operators as a uniformly elliptic approximation to the degenerate complex Monge–Ampère operator. The arguments it proposed for carrying out this program, however, relied heavily on viscosity solutions and were informal at several important points. Rather than trying to verify or repair those arguments, we retained the Bellman-approximation idea and developed the PDE argument independently. Our proof uses standard regularity theory for concave uniformly elliptic equations, including Caffarelli-type interior estimates, together with a Błocki-type gradient estimate. The latter uses positive bisectional curvature in essentially the same way as in our earlier work [5], and quite differently from the AI-generated argument.

\section{A Lipschitz function with finite Monge-Amp\`ere mass}\label{sec:MongeAmpere}

The goal of this section is to prove Proposition \ref{thm:phi}. For the remainder of this paper, for every  smooth real function $w$, let $H(w)$ denote the Hermitian endomorphism associated with $\sqrt{-1}\partial\bar\partial w$ relative to $\omega$. Writing $dd^c=\sqrt{-1}\partial\bar\partial$, we have
\begin{equation}\label{eq:ma-limit-normalization}
 (dd^c w)^n=\det H(w)\,\omega^n.
\end{equation}

\subsection{Truncated Bellman operators} Let $f\in C^\infty(M)$. Our goal is to find a uniformly Lipschitz function $\phi$ satisfying: 
\begin{equation}\label{eq:main-cma}
(dd^c\phi)^n = e^{-f}\omega^n,~ dd^c\phi\geq 0,
\end{equation} where the equation is interpreted in the Bedford-Taylor \cite{BT} sense. We solve the equation by using the Bellman method, i.e., approximating the complex Monge-Ampere equation by simpler concave elliptic PDEs. The starting point is the elementary linear algebra observation:  If $A \in M(n, \C)$ is  positive-semidefinite Hermitian, then 

\begin{equation}\label{bell}
(\det A)^{1/n}=  \frac{1}{n}\inf\{\tr(BA)~|~ B>0,~ \det B = 1\}.
\end{equation}

We let 
\begin{align*}
\sB&:= \{B\in \Herm^+(n,\CC)~|~ \det B = 1\}\\
F(A) &:= \frac{1}{n}\inf_{B\in \sB}\Tr(AB),\qquad A \geq 0.
\end{align*}

A smooth function $\phi$ is then a solution to \eqref{eq:main-cma} if and only if $A = H(u)$ satisfies 
\begin{equation}\label{eq:main-cma-F}
F(A) = h
\end{equation} where $h = e^{-f/n}$. As we shall see below, any smooth solution to the above equation is automatically psh. Of course, obtaining a smooth solution in this generality may be too optimistic. There are two additional difficulties in solving the above equation: $F$ is not smooth and is not uniformly elliptic. The latter condition is important for solving the equation on general domains, as opposed to pseudoconvex domains. So the work-around is to approximate $F$ by uniformly elliptic smooth concave operators $F_\ep.$ We need the following result.
\begin{lemma}
There exists a family of smooth maps $F_\ep:\Herm(n,\CC)\rightarrow \RR$ satisfying the following properties: 
\begin{enumerate}
\item $F_\ep$ are concave, i.e., for all $A,A_0\in \Herm(n,\CC)$ we have $$F_\ep(A_0) \leq F_\ep(A) + DF_\ep(A)(A_0-A).$$
\item (Uniform ellipticity) For every $P\geq 0$, $$\frac{\ep}{n}\Tr(P) \leq F_\ep(A+P) - F_\ep(A) \leq \frac{1}{n\ep}\Tr(P).$$
\item (Uniform approximation) There exists a constant $C = C(n)$ such that for all compact sets $K\subset \Herm^+(n,\CC)$ there exists a constant $\ep_K$ with the following property: for all $\ep < \ep_K$ we have $$\Vert F_\ep - F\Vert_{L^\infty(K)}< C\ep^2.$$
\end{enumerate}
\end{lemma}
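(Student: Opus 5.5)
Truncate the Bellman infimum \eqref{bell} and then smooth. For $0<\ep<1$ set
$$\sB_\ep:=\{B\in\sB~|~ \ep I\leq B\leq \ep^{-1}I\},\qquad G_\ep(A):=\frac1n\min_{B\in\sB_\ep}\Tr(AB),\qquad A\in\Herm(n,\CC).$$
Since $\sB_\ep$ is compact and nonempty ($I\in\sB_\ep$), $G_\ep$ is finite on all Hermitian matrices. It is concave, being an infimum of linear functions. It is also uniformly elliptic: for $P\geq0$ and every $B\in\sB_\ep$ we have $\ep\Tr P\leq \Tr(PB)\leq \ep^{-1}\Tr P$, and taking the minimum gives the two-sided bound in (2). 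The sets $\sB_\ep$ increase to $\sB$ as $\ep\downarrow0$.

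$G_\ep$ is only Lipschitz, so I will mollify it. Let $\rho_\delta$ be a radial mollifier on $\Herm(n,\CC)\cong\RR^{n^2}$ and set $F_\ep:=G_\ep*\rho_\delta$, with $\delta=\delta(\ep)$ chosen later.

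\begin{itemize}
\item Concavity and both ellipticity inequalities hold pointwise for each translate $G_\ep(\cdot-Q)$, so they survive averaging against $\rho_\delta\geq 0$.
\item Smooth concave functions satisfy the supporting hyperplane inequality, which gives (1).
\item The map $A\mapsto G_\ep(A)$ is Lipschitz with constant $\le C(n)\ep^{-1}$, because $|B|\leq \ep^{-1}$ on $\sB_\ep$. Hence $|F_\ep-G_\ep|\leq C\ep^{-1}\delta$, and the choice $\delta=\ep^3$ makes this error $\le C\ep^2$.
\end{itemize}

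For (3) it remains to show that $G_\ep=F$ on $K$ for $\ep<\ep_K$; then $\|F_\ep-F\|_{L^\infty(K)}\leq C\ep^2$ with $C=C(n)$, as required. Take a compact $K\subset\Herm^+(n,\CC)$, so there are $0<\lambda\leq\Lambda$ with $\lambda I\leq A\leq\Lambda I$ for all $A\in K$. I first check by diagonalization that the infimum in \eqref{bell} is attained at
$$B_A=(\det A)^{1/n}A^{-1}.$$
Its eigenvalues are $(\det A)^{1/n}/a_i$, where $a_i$ are the eigenvalues of $A$, so they lie in $[\lambda/\Lambda,\Lambda/\lambda]$. Choose $\ep_K$ with $\ep_K<\lambda/\Lambda$, slightly smaller than needed to leave room for the mollification below. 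Then $B_A\in\sB_\ep$ for all $\ep<\ep_K$, and so $G_\ep(A)=F(A)$ on $K$.

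The one delicate point is the mollification near $K$: $F_\ep(A)$ averages $G_\ep$ over $A-Q$ with $|Q|\leq\delta$, which may leave $K$. I handle this by applying the previous step to the slightly larger compact set $K_\delta=\{\lambda I/2\leq A\leq 2\Lambda I\}$, which contains $K+B_\delta$ once $\delta$ is small. On $K_\delta$ we have $G_\ep=F$ as soon as $\ep<\lambda/(4\Lambda)$, so this is the value of $\ep_K$ to take.

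The main care needed is uniformity. The smoothing error must be $O(\ep^2)$ with a constant depending only on $n$. This is why $\delta$ is tied to $\ep$ through the Lipschitz bound of $G_\ep$, which is where the choice $\delta=\ep^3$ comes from. By contrast, $K$ enters only through the threshold $\ep_K$.
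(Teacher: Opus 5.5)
Your proposal is correct and follows the paper's proof: truncate the Bellman infimum to $\epsilon I\le B\le \epsilon^{-1}I$, mollify at scale $\epsilon^3$ using the $C(n)/\epsilon$ Lipschitz bound, and use the explicit minimiser $(\det A)^{1/n}A^{-1}$ to get $G_\epsilon=F$ on $K$ for small $\epsilon$. The enlargement to $K_\delta$ is harmless but unnecessary: the estimate $|F_\epsilon-G_\epsilon|\le C\epsilon^2$ holds on all of $\Herm(n,\CC)$, so knowing $G_\epsilon=F$ on $K$ itself already gives (3).
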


\begin{proof}
For $0 < \ep \le 1$, we consider the truncated Bellman family: \begin{align*}
\sB_\ep&:= \{B\in \Herm^+(n,\CC)~|~ \det B = 1,~\ep I\leq B\leq \ep^{-1}I\}\\
\widehat{F}_\ep(A) &:= \frac{1}{n}\min_{B\in \sB_\ep}\Tr(AB).
\end{align*}
Then $\widehat{F}_\ep$ is concave and uniformly elliptic with the constants $\ep/n$ and $1/n\ep$. To obtain $F_\ep$ we use the standard method of mollifying with an approximation to identity. Since the uniform estimate is crucial to the final step of the proof, we include the details for the convenience of the reader. Identifying $\Herm(n,\CC)$ with $\RR^{n^2}$ we let $\eta$ be the standard radial mollifier satisfying $$\eta \in C^\infty_c(\Herm(n,\CC)), \quad \eta \ge 0, \quad \mathrm{Supp} \ \eta \subset \{\vert Z\vert<1\},\quad \int_{\Herm(n,\CC)}\eta(Z)\,dZ = 1,$$ and $$\eta_\sigma(Z) = \sigma^{-n^2}\eta\Big(\frac{Z}{\sigma}\Big).$$ We then let $$F_{\ep,\sigma}(A) = \widehat F_\ep \star\eta_\sigma(A):=\int_{\Herm(n,\CC)}\eta(Z)\widehat F_\ep(A-\sigma Z)\,dZ.$$ It is easy to see that $F_{\ep,\sigma}$ is concave and uniformly elliptic with the same constants $\ep/n$ and $1/n\ep$. The ellipticity of $\widehat F_\ep$ also implies that $\Lip(\widehat F_\ep) \leq C_n/\ep$ for some dimensional constant $C_n$ and it then follows immediately that for any $A\in \Herm(n,\CC)$, we have $$|F_{\ep, \sigma}(A) - \widehat F_\ep(A)| \leq \frac{C_n\sigma}{\ep}.$$ We now pick $\sigma = \ep^3$ and set $F_\ep = F_{\ep,\sigma}.$ Then 
\begin{equation}\label{eq:bellman-smoothening-uniform-est} 
|F_\ep(A) - \widehat F_\ep(A)| \leq C_n \ep^2\quad\text{ for all $A\in \Herm(n,\CC)$.}
\end{equation}
Finally observe that given a compact set $K\subset \Herm^+(n,\CC)$, then there exists an $\ep_K>0$ such that for all $\ep< \ep_K$ we have $\widehat F_\ep(A) = F(A).$ To see this, note that for any $A\in \Herm^+(n,\CC)$ the Bellman minimiser is completely explicit. Indeed, one can take $$B_A = (\det A)^{1/n}A^{-1}.$$ So if $K\subset \Herm^+(n,\CC)$ is compact, then there exist $\ep_K$ such that $$\ep_K \leq B_A \leq \ep_K^{-1}\quad \text{ for all $A\in K$},$$ and so $\widehat F_\ep(A) = F(A)$ for all $A\in K$ and $\ep< \ep_K$.
\end{proof}
We'll need the following two basic properties of the operators $F_\ep$. 
\begin{lemma}
\begin{enumerate}
\item For every $A\in \Herm(n,\CC)$, $\Tr(DF_\ep(A)) \geq 1.$
\item For $n\geq 2$, if $A_\ep\rightarrow A$ as $\ep\rightarrow 0^+$, and $\la_{min}(A) <0 $, then $$F_\ep(A_\ep) \rightarrow -\infty.$$
\end{enumerate}
\end{lemma}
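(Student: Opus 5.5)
For part (1), I would first prove the corresponding statement for $\widehat F_\ep$ and then pass it through the mollification. Wherever $\widehat F_\ep$ is differentiable at $A$ with a minimiser $B\in\sB_\ep$, its derivative is $D\widehat F_\ep(A)=\frac1n B$. A cleaner route that avoids differentiability issues is to use the superdifferential. By concavity, every supergradient of $\widehat F_\ep$ at $A$ lies in the closed convex hull of $\{\frac1n B : B\in\sB_\ep\}$, since $\widehat F_\ep$ is an infimum of the linear functions $A\mapsto\frac1n\Tr(AB)$. For each $B\in\sB_\ep$, AM--GM on the eigenvalues gives $\Tr B\ge n(\det B)^{1/n}=n$. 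So every such supergradient $G$ satisfies $\Tr G\ge 1$. In fact the conclusion follows more directly from directional monotonicity: $\widehat F_\ep(A+tI)-\widehat F_\ep(A)=\frac tn\min_B\Tr B\cdot$(no, more simply) $\ge \frac tn\min_{B\in\sB_\ep}\Tr B\ge t$, because $\Tr((A+tI)B)=\Tr(AB)+t\Tr B$. Convolution preserves this inequality: $F_\ep(A+tI)-F_\ep(A)=\int\eta(Z)\bigl[\widehat F_\ep(A+tI-\sigma Z)-\widehat F_\ep(A-\sigma Z)\bigr]dZ\ge t$. Since $F_\ep$ is smooth, dividing by $t$ and letting $t\to0^+$ gives $\Tr DF_\ep(A)=DF_\ep(A)(I)\ge 1$.

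For part (2), I would use the uniform estimate \eqref{eq:bellman-smoothening-uniform-est}, $|F_\ep-\widehat F_\ep|\le C_n\ep^2$. This reduces the claim to showing $\widehat F_\ep(A_\ep)\to-\infty$. To bound $\widehat F_\ep$ from above, I would exhibit an explicit test matrix $B\in\sB_\ep$. Let $v_\ep$ be a unit eigenvector of $A_\ep$ for $\la_{min}(A_\ep)$. Set $B=\ep^{-1}P+\ep^{1/(n-1)}(I-P)$, where $P$ is the orthogonal projection onto $v_\ep$. Then $\det B=\ep^{-1}\cdot\ep=1$. Since $n\ge2$, the exponent $1/(n-1)$ lies in $(0,1]$, so $\ep\le\ep^{1/(n-1)}\le 1\le\ep^{-1}$ for $\ep\le1$, and hence $B\in\sB_\ep$. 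This gives
\[
\widehat F_\ep(A_\ep)\le \frac1n\Tr(A_\ep B)=\frac1n\Bigl(\ep^{-1}\la_{min}(A_\ep)+\ep^{1/(n-1)}\bigl(\Tr A_\ep-\la_{min}(A_\ep)\bigr)\Bigr).
\]
Eigenvalues depend continuously on the matrix, so $\la_{min}(A_\ep)\to\la_{min}(A)<0$; for small $\ep$ we have $\la_{min}(A_\ep)\le\frac12\la_{min}(A)$. The second term stays bounded because $A_\ep$ is bounded. Therefore the right-hand side is at most $\frac{\la_{min}(A)}{2n\ep}+C$, which tends to $-\infty$. Adding the $O(\ep^2)$ error from the mollification finishes the proof.

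The main point requiring care is the choice of $B$ in part (2), which must respect both the determinant constraint and the two-sided truncation $\ep I\le B\le\ep^{-1}I$. This is exactly where $n\ge2$ is used: for $n=1$ the family $\sB_\ep$ reduces to $\{1\}$ and the claim fails. The other steps are routine.
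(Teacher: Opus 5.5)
Your proof is correct and follows the paper's argument. Part (1) uses the finite-difference inequality $F_\ep(A+tI)-F_\ep(A)\ge t$, obtained from $\Tr B\ge n$ and carried through the mollification, and part (2) uses the same test matrix $\diag(\ep^{-1},\ep^{1/(n-1)},\dots,\ep^{1/(n-1)})$ together with the $O(\ep^2)$ smoothing error. The only cosmetic difference is that you align the test matrix with a bottom eigenvector of $A_\ep$ rather than a fixed eigenvector of $A$, which works equally well; you should delete the stray ``(no, more simply)'' aside.
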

\begin{proof}
The first part follows from the following finite difference inequality and the smoothness of $F_\ep$: 
\begin{equation}\label{eq:finite-diff-F-ep}
F_\ep(A + tI) - F_\ep(A) \geq t,
\end{equation} for $t \ge 0$ and the reversed inequality for $t \le 0$. It is enough to prove this finite difference inequality with $F_\ep$ replaced by the truncated Bellman operators $\widehat F_\ep$ used above. For $\widehat F_\ep$ we have 
\begin{align*}
\widehat F_\ep(A+tI) &= \frac{1}{n}\inf_{B\in \sB_\ep} \Tr(AB + tB)\\
&= \frac{1}{n}\inf_{B\in \sB_\ep} (\Tr(AB) + t\Tr(B)).
\end{align*}
By the AM-GM inequality, since $\det B = 1$ we have $$\tr(B) \geq n,$$ and so $$\widehat F_\ep(A+tI) \geq \widehat F_\ep(A) + t.$$

For the second part, we claim that  if $A_\ep\rightarrow A$ as $\ep\rightarrow 0^+$, and $\la_{min}(A) <0 $, then 
\begin{equation}\label{eq:bellman-neg-ev}
\widehat F_\ep(A_\ep) \rightarrow -\infty.
\end{equation} The required result then follows from combining this with the estimate \eqref{eq:bellman-smoothening-uniform-est}  above. To prove \eqref{eq:bellman-neg-ev} suppose there is a unit eigenvector $e_1$ such that $$\langle Ae_1,e_1\rangle = -2a<0.$$ Then for $\ep<<1$, $\langle A_\ep e_1,e_1\rangle < -a.$ Also $|A_\ep| \leq M$ for some $M>0$. Assume $n \ge 2$ and extend $e_1$ to an orthonormal basis $\{e_1,\cdots,e_n\}$ and let $$B_\ep = \diag(\ep^{-1},\ep^{1/(n-1)},\cdots, \ep^{1/(n-1)}).$$ Then for small $\ep$, $B_\ep \in \sB_\ep$ and so we have 
\begin{align*}
\widehat F_\ep(A_\ep) &\leq \frac{1}{n}\Tr(A_\ep B_\ep)\\
&\leq \frac{1}{n}\Big(-\frac{a}{\ep} + \ep^{1/(n-1)}\sum_{i=2}^nA_{i\bar i}\Big)\\
&\leq -\frac{a}{n\ep} + \sqrt{n}M\ep^{1/(n-1)}.
\end{align*}
Taking $\ep\rightarrow 0$ we prove \eqref{eq:bellman-neg-ev}.
\end{proof}

\subsection{Preliminary constructions} To solve the relevant PDEs and for estimates we need two constructions which are essentially contained in our previous work, but are reproduced here for convenience. Throughout, we let $B:X\rightarrow (0,\infty)$ the function defined by $$B(x):= \inf_{u,v\in S_xM}\frac{\Rm(u,v,v,u) + \Rm(u,Jv,Jv,u)}{1+ \langle u,v\rangle^2 + \langle u,Jv\rangle^2},$$ where $S_xX$ is the unit sphere in the real tangent space $T_xX$. Note that the convention implies that $\Ric \geq (n+1)B.$ 

Note that our condition $BK>0$ means $B(x)>0$ for all $x \in X$.

\begin{lemma}\label{lem:cutoffs}
Let $(X,\omega)$ be a complete noncompact K\"ahler manifold with $BK>0$. 
\begin{enumerate}
\item There exists a smooth exhaustion function $\rho$ and constant $C_\rho>0$ such that  $$\vert\nabla\rho\vert+|H(\rho)|\le 1.$$
\item There exists a $R_0>0$ such that for any $R>R_0$ there is a function $\chi_R\in C^\infty$ satisfying \begin{equation*}\label{eq:cutoffs}
 \chi_R=1\text{ on }B(o,R),\ \ \ 
 0 \le  \chi_R \le 1, \ \ \ 
 \supp\chi_R\subset B(o,2R),\ \  \
 |\nabla\chi_R|+|\ddc\chi_R|\le C/R.
\end{equation*}
In particular, $\chi_R\to1$ locally and for every fixed integer $b\ge2$, the exists $C_b >0$ such that 
\begin{equation*}\label{eq:cutoff-powers}
 -\frac{C_b}{R}\chi_R^{b-2}\omega
 \le\ddc(\chi_R^b)
 \le\frac{C_b}{R}\chi_R^{b-2}\omega.
\end{equation*}
Here $B$ is the $\rho$-ball.
\item There is a smooth function $u_0: X  \rightarrow {\mathbb R}$ and a constant $C_0$ (depending on $u_0$)   such that
$$\vert \nabla u_0\vert\leq 1 \text{ and }0 < H(u_0) < C_0I.$$
\end{enumerate}
\end{lemma}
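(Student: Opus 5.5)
We prove the three parts separately. Throughout we use only completeness and $\Ric>0$ (which follows from $BK>0$), together with two standard tools: smoothing of the distance function, and the existence of a strictly convex or strictly psh function on manifolds of positive curvature.

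\emph{Part (1).} We start from the distance function $r(x)=d(o,x)$. It is $1$-Lipschitz and proper, by completeness. We then apply a Greene--Wu type smoothing, or equivalently the regularization of Tam (or Shi's Ricci-flow smoothing, available under bounded geometry). This gives a smooth function $\tilde\rho$ with $|\tilde\rho-r|\le C$, $|\nabla\tilde\rho|\le C$ and $|\ddc\tilde\rho|\le C$. Tam's construction uses only completeness and a lower Ricci bound; here $\Ric>0$ holds, so the lower bound is available. Finally we set $\rho=\tilde\rho/C'$ for a large constant $C'$. This gives $|\nabla\rho|+|H(\rho)|\le 1$, and $\rho$ remains an exhaustion because $\rho\ge (r-C)/C'$.

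\emph{Part (2).} Pick $\psi\in C^\infty(\RR)$ with $\psi=1$ on $(-\infty,1]$, $\psi=0$ on $[2,\infty)$ and $0\le\psi\le1$. Set $\chi_R=\psi(\rho/R)$. Then $\chi_R=1$ on $\{\rho<R\}$ and $\supp\chi_R\subset\{\rho<2R\}$. The chain rule gives
$$\nabla\chi_R=\psi'\,\nabla\rho/R,\qquad \ddc\chi_R=\psi''\,\partial\rho\wedge\bar\partial\rho/R^2+\psi'\,\ddc\rho/R,$$
so $|\nabla\chi_R|+|\ddc\chi_R|\le C/R$ for $R\ge1$. For the power estimate we compute
$$\ddc(\chi_R^b)=b\chi_R^{b-1}\ddc\chi_R+b(b-1)\chi_R^{b-2}\,\partial\chi_R\wedge\bar\partial\chi_R.$$
Using $\chi_R\le1$ and $|\partial\chi_R|^2\le C/R^2\le C/R$, both terms are bounded by $C_b R^{-1}\chi_R^{b-2}\omega$.

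\emph{Part (3).} This is the main point. We need a smooth function that is strictly psh, has bounded complex Hessian, and is $1$-Lipschitz.

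Our first attempt would be to take $u_0=\ep\rho$. This gives the Lipschitz bound and the upper Hessian bound, but not positivity of the Hessian. We therefore need a genuinely strictly psh function, and the natural source is curvature. Under $BK>0$ (indeed under $\Ric>0$) on a noncompact complete manifold, the Busemann function $b=\lim_{t\to\infty}(t-d(x,\gamma(t)))$ of a ray $\gamma$ from $o$ is $1$-Lipschitz. Its Laplacian comparison should be upgraded to plurisubharmonicity. For a Kähler manifold with $BK\ge0$, Wu's and Greene--Wu's results give that $-b$, or its suitable modification, is psh. We would then make it strictly psh by averaging over all rays, $\beta(x)=\sup_\gamma b_\gamma(x)$ (this is the Cheeger--Gromoll-type construction adapted in \cite{DPS3}). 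Strictness is obtained from the positivity of $B(x)$ via the second variation formula, giving $\ddc\beta\ge c(x)\omega>0$ in the barrier sense.

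Next we smooth $\beta$ by Greene--Wu approximation. This preserves strict plurisubharmonicity on compacts with a controlled loss, and keeps the gradient bound up to a factor. An arbitrary function does not yet have a global Hessian bound, so we compose with a slowly growing concave function. We take
$$u_0=\epsilon\rho+\sum_k \delta_k\, \theta_k(\tilde\beta),$$
where the $\theta_k$ are localized to annuli $\{k\le\rho\le k+2\}$ and the $\delta_k$ are chosen small enough that each term contributes at most $2^{-k}$ to $|\nabla u_0|$ and to $|H(u_0)|$. The term $\epsilon\rho$ is needed for the upper Hessian bound. Wherever the annular terms are positive-definite they dominate the possibly negative part of $\epsilon H(\rho)$, provided $\epsilon$ is chosen locally via a partition argument.

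The hardest step will be ensuring $H(u_0)>0$ everywhere while keeping $|\nabla u_0|\le1$ and $H(u_0)\le C_0I$. We would handle it by using only the lower bound $\ddc\tilde\beta\ge c_k\omega$ on each annulus. We then choose the weights so that
$$\sum_k\delta_k c_k\mathbf{1}_{\text{annulus}_k}\ \ge\ 2\epsilon_k\sup|H(\rho)|$$
locally, with $\epsilon$ replaced by a slowly varying positive function of $\rho$ whose derivatives are negligible. Finally we rescale to get the constants $\le1$.
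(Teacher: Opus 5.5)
Part (2) is correct and is exactly the paper's argument. Parts (3) and (1), however, have genuine gaps.

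\textbf{Part (3).} The decisive point is a \emph{global} bound $H(u_0)\le C_0I$ together with pointwise strict positivity. This is never established, and the gluing you propose cannot work as written.
\begin{enumerate}
\item The terms $\theta_k(\tilde\beta)$ ``localized to annuli'' cannot be psh. By the maximum principle, a compactly supported psh function on a connected noncompact manifold vanishes identically. So each term has negative complex-Hessian directions where it is cut off, coming from $\theta_k'<0$, or from $\theta\,dd^c\chi_k$ and $\partial\chi_k\wedge\bar\partial\theta(\tilde\beta)$ if you use cutoffs.
\item These negative terms are of order $\delta_k$ with no factor of $c_k$, and they are absent from your inequality $\sum_k\delta_kc_k\mathbf{1}_{\mathrm{annulus}_k}\ge 2\epsilon_k\sup|H(\rho)|$.
\item Absorbing them into neighbouring terms forces the $\delta_k$ to vary slowly relative to the $c_k$. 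That conflicts with shrinking $\delta_k$ to kill the growth of $\sup_{\mathrm{annulus}_k}|H(\tilde\beta)|$. Greene--Wu smoothing only bounds $H(\tilde\beta)$ locally, roughly by the inverse of a smoothing radius that must shrink wherever $c(x)$ or the local geometry degenerates. Your sketch does not resolve this conflict.
\item Composing with a concave $\theta$ adds a further negative term $\theta''\,\partial\tilde\beta\wedge\bar\partial\tilde\beta$.
\item The term $\epsilon\rho$ cannot help the upper bound. Its Hessian is bounded but indefinite, so it only costs positivity.
\end{enumerate}
There is also a sign slip: with $b=\lim_{t\to\infty}(t-d(x,\gamma(t)))$ it is $b$, not $-b$, that is psh under $BK\ge0$.

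\textbf{The missing idea} is to smooth \emph{globally} with the heat semigroup $P_t$, as the paper does. Take $u_0=P_1b$ for a single Busemann function $b$, which is psh and $1$-Lipschitz.
\begin{enumerate}
\item Under $\Ric\ge0$, the Bochner (Bakry--\'Emery) inequality gives $|\nabla P_1b|\le P_1|\nabla b|\le1$.
\item The Li--Yau heat kernel gradient bound gives $|\Delta P_1b|\le C$.
\item Under $BK\ge0$, the Ni--Tam maximum principle for $(1,1)$-forms keeps $H(P_1b)\ge0$, and $BK>0$ makes it strictly positive.
\item Since $H(P_1b)\ge0$, its largest eigenvalue is at most its trace, a fixed multiple of $\Delta P_1b$. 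Hence $H(P_1b)\le C_0I$, with no gluing at all.
\end{enumerate}
Positivity is what upgrades a Laplacian bound, which is all a Ricci lower bound ever yields, to a full complex Hessian bound.

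\textbf{Part (1).} The same point breaks your argument. Tam's regularization under a lower Ricci bound gives $|\tilde\rho-r|\le C$, $|\nabla\tilde\rho|\le C$ and $|\Delta\tilde\rho|\le C$, i.e.\ control of only the \emph{trace} of $H(\tilde\rho)$. Greene--Wu smoothing of $r$ gives no Hessian control, because the distributional Hessian of $r$ has a negative singular part on the cut locus. Shi's smoothing needs bounded curvature, which is not assumed.

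The repair again uses $BK\ge0$. Complex Hessian comparison gives $dd^c r\le \frac{C}{r}\omega$ in the barrier sense away from $o$, hence $dd^c\sqrt{1+r^2}\le C\omega$. Ni--Tam propagates this upper bound to the heat flow, and the Li--Yau bound on the Laplacian then bounds the eigenvalues from below. This is the content of the Ni--Tam result the paper cites for part (1).
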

\begin{proof}
(1) is just Proposition 3.1 of \cite{NT3}: time 1 heat-flow of a distance function gives a smooth exhaustion function $\rho$ with $$|\nabla \rho|, | H(\rho)| \leq C .$$ For the second part, one simply takes $\chi = \eta(\rho/R)$ where $\eta$ is a standard one-variable cut-off function. For the third part, instead of the distance function, we take the heat flow at unit time of a Buseman function given by $$b(x) = \lim_{t\rightarrow \infty}(t - d(x,\ga(t)),$$ where $\ga$ is a ray. It is well known \cite{Wu, GW, NT3} that if $BK>0$, $H(P_1b) >0$. Then $u_0 = P_1b$, where $P_1b$ is the time 1 heat-flow of $b$, satisfies the required estimates once again by the aforementioned work of Ni and Tam \cite{NT, NT2}. 
\end{proof}

For the estimates, we will also need a barrier function that is better behaved than $u_0$. This is furnished by the following result. 

\begin{lemma}\label{lem:betterbarrier}
For each $a>0$ there exists a smooth function $\zeta_a$ solving $$\Delta\zeta_a = a(\zeta_a - u_0).$$ Moreover, there exists a dimensional constant $C_n$ such that $$|\zeta_a - u_0| \leq \frac{C_n}{\sqrt{a}}.$$
\end{lemma}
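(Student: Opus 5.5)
Writing $v=\zeta_a-u_0$, the equation $\Delta\zeta_a=a(\zeta_a-u_0)$ becomes
$$\Delta v - a v = -\Delta u_0.$$
By Lemma \ref{lem:cutoffs}(3), $0<H(u_0)<C_0 I$, so $0<\Delta u_0\le nC_0$. This already gives a crude bound $|v|\le nC_0/a$, but that constant depends on $u_0$. To obtain a dimensional constant $C_n/\sqrt a$, I will instead use the gradient bound $|\nabla u_0|\le 1$. The idea is to write $\zeta_a$ as the resolvent of $u_0$:
$$\zeta_a=a(a-\Delta)^{-1}u_0=\int_0^\infty a e^{-at}P_t u_0\,dt,$$
where $P_t$ is the heat semigroup. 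Then
$$\zeta_a-u_0=\int_0^\infty ae^{-at}(P_tu_0-u_0)\,dt,$$
and the whole problem reduces to the estimate $|P_tu_0-u_0|\le C_n\sqrt t$.

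\textbf{Existence.} Since $u_0$ is Lipschitz, it has linear growth, so the heat semigroup is well defined on it. The manifold has $\Ric\ge 0$ (in fact $\Ric>0$ from $BK>0$), so it is stochastically complete and has Li--Yau Gaussian heat kernel bounds. Together with volume growth of at most Euclidean order, these make $P_tu_0$ well defined and smooth. To justify the resolvent representation rigorously, I will first solve Dirichlet problems $\Delta w-aw=-au_0$ on $\rho$-balls and pass to the limit. The maximum principle for the operator $\Delta-a$ with $a>0$, applied against the barriers $u_0\pm C/\sqrt a$ (once the estimate below is known, or via the integral formula), gives local uniform bounds. Local elliptic estimates then give smooth convergence. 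Alternatively, I can simply define $\zeta_a$ by the integral and differentiate under the integral, using $\partial_tP_tu_0=\Delta P_tu_0$ and integrating by parts in $t$ to get $\Delta\zeta_a=a\zeta_a-au_0$.

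\textbf{Key estimate.} The main step is $|P_tu_0(x)-u_0(x)|\le C_n\sqrt t$ with a dimensional constant. Since $\Lip(u_0)\le1$ and $\int H(x,y,t)\,dy=1$ by stochastic completeness,
$$|P_tu_0(x)-u_0(x)|\le\int_X H(x,y,t)\,d(x,y)\,dy.$$
So I need a bound on the first moment of the heat kernel. The Li--Yau upper bound $H\le C_n V_x(\sqrt t)^{-1}e^{-d^2/5t}$, combined with Bishop--Gromov volume comparison ($V_x(r)\le (r/\sqrt t)^{2n}V_x(\sqrt t)$ for $r\ge\sqrt t$), gives this moment $\le C_n\sqrt t$ after summing over annuli. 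A cleaner alternative uses the nonnegative Ricci curvature of $X$: by Laplacian comparison, $\Delta d_x^2\le 4n$ (real dimension $2n$) in the barrier sense, so $\partial_t\int H\,d_x^2\le 4n$. This gives $\int H\,d_x^2\le 4nt$, and Cauchy--Schwarz yields the first moment $\le 2\sqrt{nt}$. Justifying the integration by parts needs the Gaussian decay, but the constant is then explicitly dimensional. I expect this moment bound, with a constant independent of $u_0$ and $X$, to be the only real obstacle.

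\textbf{Conclusion.} Combining the resolvent formula with the moment bound,
$$|\zeta_a-u_0|\le\int_0^\infty ae^{-at}\,2\sqrt{nt}\,dt=2\sqrt n\,\Gamma(3/2)\,a^{-1/2}=\frac{\sqrt{\pi n}}{\sqrt a}.$$
This is the claimed bound with $C_n=\sqrt{\pi n}$.
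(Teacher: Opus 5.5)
Your proposal is correct and follows the paper's proof. The paper also defines $\zeta_a=a\int_0^\infty e^{-at}P_tu_0\,dt$ and integrates the bound $\Vert P_tu_0-u_0\Vert_{L^\infty}\le C\sqrt{t}$ against $ae^{-at}$. The one difference is that the paper cites this heat-semigroup bound as standard, from its earlier work. You instead prove it via the second-moment estimate $\int_X H\,d_x^2\le 4nt$ obtained from Laplacian comparison, which also makes the dimensional constant explicit.
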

\begin{proof}
Let $P_t$ be the heat semi-group as above. We then let $$\zeta_a(x) = a\int_0^\infty e^{-at}P_tu_0(x)\,dt.$$ It is easy to check that this function solves the equation. Next, we note that since $\Ric \geq 0$ and $|\nabla u_0|\leq 1$, we have the standard estimate (cf. \cite[Lemma 6]{DPS1}) $$\Vert P_tu_0 - u_0\Vert_{L^\infty(X)} \leq C\sqrt{t}. $$ We then estimate 
\begin{align*}
|\zeta_a(x) - u_0(x)|  &\leq a\int_0^\infty e^{-at}|P_tu_0(x) - u_0(x)|\,dt\\
&\leq Ca\int_0^\infty e^{-at}\sqrt{t}\,dt\\
&= \frac{C'}{\sqrt{a}},
\end{align*}
for some $C'>0$.
\end{proof}

\begin{lemma}
Let $(X,\omega)$ be a complete K\"ahler manifold with $BK>0$. Then there exists an exhaustion function $f\in C^\infty(X,\RR)$ satisfying the following properties:
\begin{enumerate}
\item $\sup_{X}e^{-f/n} < \infty.$
\item $e^{-f/n}\vert \nabla f\vert  \leq \min(nB/4,1).$
\item $\int_X e^{-f}\omega^n < \infty.$
\item $0<h(x)<\frac{1}{2}m(x)$ where $m(x)=(\det(H(u_0))(x))^{1/n}$
\end{enumerate}
\end{lemma}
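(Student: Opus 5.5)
We build $f$ by hand as $f = n\,\psi\circ\rho + g$. Here $\rho$ is the exhaustion of Lemma~\ref{lem:cutoffs}(1), $\psi$ is a rapidly increasing convex function of one variable, and $g$ is a smooth correction that makes $e^{-f/n}$ small relative to the positive continuous functions $B$ and $m$. Throughout, $h=e^{-f/n}$.

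The positive continuous quantities to beat are $B(x)$, $m(x)=(\det H(u_0))^{1/n}$, and the volume growth of $\rho$-balls. Since $\rho$ is an exhaustion, the sets $K_j=\{\rho\le j\}$ are compact. On each $K_j$, set
\[
\beta_j=\min_{K_j}\min(nB/4,\,m/2,\,1)>0 \quad\text{and}\quad V_j=\mathrm{Vol}(K_{j+1}).
\]
We want a smooth nondecreasing function $\Psi:[0,\infty)\to\RR$ with $\Psi(t)\to\infty$ such that, on $\{j\le t\le j+1\}$,
\[
e^{-\Psi(t)}\bigl(1+\Psi'(t)\bigr)\le \tfrac12\,\beta_{j+2} \quad\text{and}\quad e^{-n\Psi(t)}V_j\le 2^{-j}.
\]
Such a $\Psi$ exists by a standard construction. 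First choose a piecewise linear increasing function with slopes at least $1$ and values large enough that $e^{-\Psi}$ is tiny. Then smooth it while keeping the slope bounded on each interval, say $\Psi'\le s_j$, and raise the values further so that $e^{-\Psi}(1+s_j)$ is as small as required. This works because on a fixed interval the slope bound can be fixed first and the height increased afterwards; adding a constant on $[j,\infty)$ only requires gluing, which is where the smoothing is used. Set
\[
f=n\Psi(\rho).
\]
Then $f$ is a smooth exhaustion and $h=e^{-\Psi(\rho)}$.

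Next we check the four properties.
\begin{enumerate}
\item $h\le e^{-\Psi(0)}$, so $\sup h<\infty$.
\item Since $|\nabla\rho|\le1$, we have $e^{-f/n}|\nabla f| = n\,e^{-\Psi(\rho)}\Psi'(\rho)|\nabla\rho|\le n\,e^{-\Psi}\Psi'$. Absorbing the factor $n$ into the choice of $\Psi$, this is at most $\beta_{j+2}\le \min(nB/4,1)$ at points with $\rho\in[j,j+1]$, since such points lie in $K_{j+1}\subset K_{j+2}$.
\item $\int_X e^{-f}\omega^n\le \sum_j e^{-n\Psi(j)}V_j\le\sum_j 2^{-j}<\infty$, using that $\Psi$ is nondecreasing.
\item Positivity of $h$ is clear. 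The bound $h\le\frac12\beta_{j+2}\le \frac12 m$ becomes strict once we shrink by a further factor.
\end{enumerate}

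The main obstacle is only the bookkeeping in the construction of $\Psi$: keeping the product $e^{-\Psi}\Psi'$ small while $\Psi$ must grow. Growth itself costs nothing, because we may take $\Psi'$ bounded, even $\Psi'\le 1$ away from the gluing regions. For instance, first pick $\Psi_0(t)=t$. Then find a smooth positive function $c(t)$ with $c\le \frac{1}{4n}\beta_{j+2}$ and $c^n\le 2^{-j}/V_j$ on $[j,j+1]$, with $c$ nonincreasing and $|c'|\le c$. Such a $c$ exists by choosing a decreasing sequence of values $c_j$ with ratios $c_j/c_{j+1}\le e$ and smoothing, after first making $c_j$ small enough; the ratio condition can be met by also imposing $c_{j+1}\ge c_j/e$, or, if the targets decay faster, by subdividing the intervals. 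Take $\Psi=-\log c$. Then $\Psi'=-c'/c\le 1$, and
\[
e^{-\Psi}(1+\Psi')\le 2c.
\]
This gives all the estimates directly.
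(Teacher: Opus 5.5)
Your reduction is the same as the paper's: $f=n\Psi(\rho)$, so $h=c(\rho)$ with $c=e^{-\Psi}$, and (1)--(4) are checked from bounds on $c$ and $c'$. That verification is fine once a suitable $c$ exists. The gap is in the existence of $c$, which is the whole content of the lemma.

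\textbf{Why your construction of $c$ fails.} Your concrete construction imposes $|c'|\le c$ (i.e.\ $\Psi'\le 1$) together with $c\le \beta_{j+2}/(4n)$ on $[j,j+1]$. But $|c'|\le c$ means $(\log c)'\ge -1$, hence $c(t)\ge c(0)e^{-t}$ for all $t$. Nothing in the hypotheses bounds from below the rate at which $B$ or $m=(\det H(u_0))^{1/n}$ tends to zero; $BK>0$ only says $B>0$ pointwise. So if $\beta_j$ decays faster than $e^{-j}$, your two requirements are incompatible.

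Subdividing the intervals does not help. The constraint $(\log c)'\ge -1$ limits the decay of $c$ per unit length in $t$, however $[j,j+1]$ is cut up. Your first sketch is circular in the same way. Raising $\Psi$ by a constant $K$ on $[j,\infty)$ needs a gluing region in which $\Psi'$ is of order $K$ divided by its width. So the slope bound $s_j$ cannot be fixed before the heights are chosen.

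\textbf{The missing idea.} The loss comes from bounding $e^{-\Psi}\Psi'$ by $\sup e^{-\Psi}\cdot\sup(1+\Psi')$. Property (2) only needs $e^{-f/n}|\nabla f|=n|c'(\rho)|\,|\nabla\rho|\le n|c'(\rho)|$ to be small; there is no reason to control $c'/c$.

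The paper prescribes the derivative and integrates:
\[
h=e^{-f/n}=\int_\rho^\infty G(r)\,dr,\qquad 0<G<\min\bigl(-g',\tfrac14\inf_{S_r}B,\tfrac1n\bigr).
\]
Here $g$ is smooth, strictly decreasing, tends to $0$, and satisfies $g(r)<\tfrac12\inf_{\{\rho\le r\}}m$ and $g(r)\le\inf_{S_r}e^{-d(o,x)}$. Then:
\begin{enumerate}
\item $|\nabla h|=G(\rho)|\nabla\rho|$ gives (2).
\item $h\le\int_\rho^\infty(-g')=g(\rho)$ gives (4).
\item The same bound $h\le g(\rho)$, with volume comparison, gives (3).
\end{enumerate}
If you replace your $c$ by such a tail integral, with $g$ chosen below $\tfrac12\beta_{j+2}$ and $(2^{-j}/V_j)^{1/n}$ on $[j,j+1]$, the rest of your argument goes through unchanged.
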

\begin{proof}
We reproduce the proof from \cite{DPS1}. Without loss of generality we may assume that $\inf_X \rho = 0$. Fix a point $o\in \rho^{-1}(0)$. We let $B_r = \{\rho < r\}$ and $S_r = \{\rho = r\}$. Let $g:[0,\infty)\rightarrow (0,\infty)$ be a smooth strictly decreasing function such that $$g(r) \leq \inf_{x\in S_r}e^{-d(o,x)},$$ and $$g(r)<\frac{1}{2} \inf_{\rho\leq r} m.$$ Note that since $\rho$ is an exhaustion, the infimum on the right is positive for each $r>0$. Note also that $\lim_{r\rightarrow \infty}g(r) = 0.$ We now set $$\xi(\rho) = -n\ln\Big(\int_{\rho}^\infty G(r)\,dr\Big), $$ where we choose $G$ to be a smooth function satisfying $$0< G(r) < \min\Big(-g'(r), \frac{1}{4}\inf_{x\in S_r}B(x), \frac{1}{n}\Big).$$ 
Then $f(x) = \xi(\rho(x))$ is the required function.

\end{proof}

\subsection{Proof of the Theorem \ref{thm:MA}}

Let $c_j\rightarrow\infty$ such that the domains $\Omega_j = \{\rho < c_j\}$ are smooth. For $\delta>0$ consider the following Dirichlet problem: $$\begin{cases} F_\ep(H(u)) - \delta(u - u_0) = h\\ u = u_0 \text{ on }\partial\Omega_j,\end{cases}$$ where $h = e^{-f/n}>0$ and $f$ is as in the Lemma above. By standard theory \cite{T, CKN} there exists a unique smooth solution $u = u_{\ep,\delta,j}$. Moreover $$u_{\ep,\delta,j}\xrightarrow{C^\infty_{loc}(X)} u_{\ep,\delta},$$ where $u_{\ep,\delta}$ solves $$F_\ep(u_{\ep,\delta}) - \delta(u_{\ep,\delta} - u_0) = h. $$ For estimates we will further assume that 
\begin{itemize}
\item $\sup_X\vert h \vert < \infty$.
\item $|\nabla h| \leq B/2$.
\item $\int_X h\omega^n <\infty.$
\end{itemize} 
Such a function was constructed in \cite{DPS1} and we reproduce it at the end of the section. 
\begin{lemma}\label{lem:comparison}
There exists a constant $K = K(\Vert h\Vert_{L^\infty(X)},\ep)$ (but independent of $\delta$) such that $$\sup_X\vert u_{\ep,\delta} - u_0\vert < \frac{K}{\delta}.$$ 
\end{lemma}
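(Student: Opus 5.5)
The bound will come from the maximum principle applied on the approximating domains $\Omega_j$, using $u_0\pm K/\delta$ as sub- and supersolutions. Once $|u_{\ep,\delta,j}-u_0|\le K/\delta$ holds on $\Omega_j$ with $K$ independent of $j$ and $\delta$, it passes to the locally smooth limit $u_{\ep,\delta}$. (Strictly, the maximum principle gives $\le K/\delta$; replacing $K$ by $K+1$ gives the strict inequality in the statement.)

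\textbf{Step 1: the operator at $u_0$ is bounded.} By Lemma \ref{lem:cutoffs}(3) we have $0<H(u_0)<C_0I$. Uniform ellipticity (part (2) of the first lemma), applied with $A=0$ and $P=H(u_0)\ge 0$, gives
$$|F_\ep(H(u_0))-F_\ep(0)|\le \frac{1}{n\ep}\Tr H(u_0)\le \frac{C_0}{\ep}.$$
Since $|F_\ep(0)|=|F_\ep(0)-\widehat F_\ep(0)|\le C_n\ep^2$ by \eqref{eq:bellman-smoothening-uniform-est}, the quantity $g:=F_\ep(H(u_0))$ is uniformly bounded on $X$. Set
$$K:=\sup_X|g|+\Vert h\Vert_{L^\infty(X)},$$
which depends only on $\ep$, $C_0$, $n$ and $\Vert h\Vert_{L^\infty}$, and in particular not on $\delta$ or $j$.

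\textbf{Step 2: comparison on $\Omega_j$.} Let $u=u_{\ep,\delta,j}$ and $w=u-u_0$, so $w=0$ on $\partial\Omega_j$. Suppose $w$ has a positive interior maximum at $p$. There $H(u)(p)\le H(u_0)(p)$, so by monotonicity of $F_\ep$ (ellipticity) $F_\ep(H(u))(p)\le g(p)$. The equation then gives
$$\delta w(p)=F_\ep(H(u))(p)-h(p)\le g(p)-h(p)\le K.$$
At a negative interior minimum the same argument with the inequalities reversed gives $\delta w\ge g-h\ge -K$. Hence $|w|\le K/\delta$ on $\Omega_j$, with no need for barriers at the boundary since $u=u_0$ there.

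\textbf{Step 3: passing to the limit.} Because the bound is uniform in $j$ and $u_{\ep,\delta,j}\to u_{\ep,\delta}$ in $C^\infty_{loc}$, we obtain $\sup_X|u_{\ep,\delta}-u_0|\le K/\delta$.

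The only delicate point is that the estimate must be done on the $\Omega_j$, where maxima are attained, rather than on the noncompact $X$. Should one insist on working with $u_{\ep,\delta}$ directly on $X$, one would need an Omori–Yau maximum principle (available since $\Ric\ge0$) applied to $w$, or a small perturbation of $w$ by $\eta\rho$ with $\eta\to0$. The monotonicity in Step 2 handles the second-order term in either approach.
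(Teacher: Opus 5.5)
Your proof is correct and follows the paper's argument. You apply the maximum principle to $u_{\ep,\delta,j}-u_0$ on $\overline{\Omega_j}$, use monotonicity of $F_\ep$ at interior extrema together with the boundary condition, and then pass to the limit in $j$. The only difference is how $F_\ep(H(u_0))$ is bounded: you get an explicit bound from uniform ellipticity and $|F_\ep(0)|\le C_n\ep^2$, while the paper uses continuity of $F_\ep$ on the compact set containing the values of $H(u_0)$.
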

\begin{proof}
It suffices to prove that $$\sup_X\vert u_{\ep,\delta,j} - u_0\vert < \frac{K}{\delta}$$ on $\overline{\Omega_j}$ as long as $K$ is independent of $j$. Let $p$ be a minimum of $v = u_{\ep,\delta,j} - u_0$. If $p$ is on the boundary, then $v(p) = 0$. Else $p$ is in the interior, and so by the maximum principle $H(u_{\ep,\delta,j}) \geq H(u_0)$  at $p$. Hence by ellipticity of $F$, $$h + \delta v(p) = F_\ep(H(u_{\ep,\delta,j}))(p) \geq  F_\ep(H(u_0))(p) \geq -C,$$ since $H(u_0)$ lies in a compact set and $F_\ep$ is smooth. In any case, we then have that $$\delta \inf v \geq -C - \Vert h\Vert_{L^\infty(X)}.$$ The upper bound is established in the same way. 
\end{proof}
\begin{prop}
$\sup_{X}\vert\nabla u_{\ep,\delta}\vert \leq 1.$
\end{prop}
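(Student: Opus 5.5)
We will prove the bound by a Błocki-type maximum principle argument applied to $\beta=|\nabla u|^2$, where $u=u_{\ep,\delta}$. We work first on $u_{\ep,\delta,j}$ in $\Omega_j$, or directly on $X$ with a cutoff $\chi_R^b$ from Lemma \ref{lem:cutoffs}, and then let $R\to\infty$. Positive bisectional curvature enters only through the commutator term, and the choice $|\nabla h|\le B/2$ is what closes the estimate.

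Let $a^{i\bar j}=DF_\ep(H(u))$ denote the linearized coefficients. They are uniformly elliptic with constants $\ep/n,1/(n\ep)$, and by the lemma above $\Tr(a)\ge 1$. Differentiate the equation $F_\ep(H(u))-\delta(u-u_0)=h$ in the direction $\nabla u$, i.e.\ pair its derivative with $u_{\bar k}$. Using the Kähler commutation formulas, we will compute
\[
a^{i\bar j}\beta_{i\bar j}=a^{i\bar j}\big(u_{ki}u_{\bar k\bar j}+u_{k\bar j}u_{i\bar k}\big)+a^{i\bar j}R_{i\bar j k\bar l}u_{l}u_{\bar k}+2\mathrm{Re}\langle\nabla u,\nabla(h+\delta(u-u_0))\rangle .
\]
The curvature term is bounded below by $B\,\Tr(a)\,|\nabla u|^2\ge B|\nabla u|^2$. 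This uses the definition of $B$ together with $\Tr(a)\ge1$; the positive weight $1+\langle u,v\rangle^2+\dots$ in the definition of $B$ only helps. The $\delta$ term equals $2\delta|\nabla u|^2-2\delta\langle\nabla u,\nabla u_0\rangle$, which is at least $2\delta\beta-2\delta\sqrt\beta$ since $|\nabla u_0|\le1$. The $h$ term is at least $-2|\nabla h|\sqrt\beta\ge -B\sqrt\beta$.

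At a maximum point of $\beta$ where $\beta>1$, these bounds combine to give
\[
0\ge a^{i\bar j}\beta_{i\bar j}\ge B(\beta-\sqrt\beta)+2\delta(\beta-\sqrt\beta)>0,
\]
which is a contradiction, so $\beta\le 1$ wherever the maximum is attained.

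The main obstacle is noncompactness: $\beta$ need not attain its maximum. To handle this we will consider $\chi_R^b\beta$, or a perturbation such as $\beta-\eta\rho$. At an interior maximum, the gradient condition converts the cross terms $a^{i\bar j}\partial_i\chi_R^b\,\partial_{\bar j}\beta$ into terms controlled by $\chi_R^{b-2}|\nabla\chi_R|^2\beta/\chi_R$ and $|\ddc\chi_R^b|\beta$. Uniform ellipticity bounds these by $C_b/(\ep R)$ times $\chi_R^{b-2}\beta$, once a priori bounds on $|\nabla u|$ are available (local $C^1$ bounds from smoothness, and global ones from Lemma \ref{lem:comparison} plus interior uniformly elliptic estimates on unit balls). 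Since these bounds are global, $\beta$ is bounded on $X$, so the cutoff error vanishes as $R\to\infty$, and we obtain $\sup\beta\le 1+o(1)$. The positive terms $B\beta$ do not degenerate because $B>0$ is only needed on the compact support.

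If we work on $\Omega_j$ instead, we would additionally need a boundary gradient bound from barriers built from $u_0$ and $\zeta_a$ of Lemma \ref{lem:betterbarrier}. For that reason we prefer to work directly on $X$ with the cutoff.
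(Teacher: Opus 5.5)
There is a genuine gap in your proposal.

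Your computation of $a^{i\bar j}\beta_{i\bar j}$ matches the final step of the paper's proof. So does the way you bring in $BK>0$ through $\Tr(DF_\ep)\ge 1$, your use of $|\nabla h|\le B/2$ and $|\nabla u_0|\le 1$, and the contradiction at a maximum where $\beta>1$. The gap is the step you yourself mark as indispensable: an a priori bound $\sup_X|\nabla u_{\ep,\delta}|<\infty$. Without it neither localization closes:

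\begin{itemize}
\item At a maximum $p$ of $\chi_R^b\beta$, the only coercive term is $\chi_R^b(B+2\delta)(\beta-\sqrt\beta)$. It is linear in $\beta$ and weighted by $\chi_R^b$, while the cutoff error is $\frac{C}{\ep R}\chi_R^{b-2}\beta$. When $\beta(p)$ is large this only forces $\chi_R(p)^2\lesssim(\ep\delta R)^{-1}$, which controls $\chi_R^b\beta(p)$ only in terms of $\sup_X\beta$.
\item Likewise, $\beta-\alpha\rho$ need not attain its supremum unless $\beta=o(\rho)$.
\end{itemize}

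Your proposed source for the global bound is Lemma \ref{lem:comparison} plus interior uniformly elliptic estimates on unit balls. That is not available here. Krylov--Safonov/Caffarelli-type interior $C^1$ estimates have constants that depend on the chart. Uniformity over $X$ would require charts of a fixed size in which $\omega$ is uniformly controlled (bounded curvature and injectivity radius bounded below, or similar). Nothing of the sort is assumed for a complete K\"ahler manifold with $BK>0$. If the controlled scale $r(x)$ degenerates at infinity, such estimates give at best $|\nabla u|(x)\lesssim r(x)^{-1}\,\mathrm{osc}_{B(x,r(x))}u$. Lemma \ref{lem:comparison} only bounds this oscillation by $2r(x)+2K/\delta$, which does not shrink with the ball, so no uniform bound results.

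The missing idea is the paper's auxiliary term, which needs nothing beyond the cutoffs $\chi_R$. The paper applies the maximum principle to $V=\chi_R^2 w+\psi(v)$, where $w=\beta$, $v=u-u_0$, and $\psi(x)=(M-x)^2$ with $M=\sup_X v+1$ (finite by Lemma \ref{lem:comparison}).
\begin{itemize}
\item Concavity of $F_\ep$ gives $Lv\le F_\ep(H(u))-F_\ep(H(u_0))$, which is bounded above, so $\psi'(v)Lv\ge -C$.
\item The term $\psi''(v)F_\ep^{i\bar j}v_iv_{\bar j}\ge\frac{2\ep}{n}|\nabla v|^2\ge\frac{\ep}{n}w-C$ is coercive, linear in $w$, and carries no cutoff factor.
\item For $R$ large this absorbs the $-\frac{C}{R}w$ errors. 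The cross term between $\nabla\chi_R^2$ and $\nabla w$ is first absorbed into $\chi_R^2\frac{\ep}{n}Q$, with $Q=\sum_{i,a}|u_{ia}|^2+\sum_k\mu_k^2$.
\item The maximum principle then gives $w\le C(\ep,\delta)$ independently of $R$.
\end{itemize}
Only after this does the argument with $w-\alpha\rho$, or your $\chi_R^b\beta$ argument, yield $w\le 1$.
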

\begin{proof}
Set $w = \vert\nabla u_{\ep,\delta}\vert^2$, $q = \delta(u-u_0) + h$, and let $L$ denote the linearised operator $$L = F_\ep^{i\bar j}\nabla_i\nabla_{\bar j}\text{ where }F_\ep^{i\bar j} = \frac{\partial F_\ep}{\partial_{\la_{i\bar j}}}(H(u)).$$ We first claim that 
\begin{equation}\label{eq:claim-bounded-grad}
\sup_{X}w < \infty.
\end{equation}Computing at a point in coordinates which are normal for $\omega$ and in which the Hessian of $u$ is diagonal (with say eigenvalues $\mu_1,\cdots,\mu_n$), we have 
\begin{align*}
Lw &= F_\ep^{i\bar j}(g^{a\bar b}u_au_{\bar b})_{i\bar j}\\
&= F_\ep^{i\bar j}\Big(u_{ai\bar j}u_{\bar a} + u_{a}u_{i\bar j \bar a} + u_{ia}u_{\bar j\bar a}+u_{a\bar j}u_{i\bar a} + R_{i\bar j a\bar b}u_a u_{\bar b}\Big)\\
&\geq F_\ep^{i\bar j}\Big(u_{ai\bar j}u_{\bar a} + u_{a}u_{i\bar j \bar a} + u_{ia}u_{\bar j\bar a}+u_{a\bar j}u_{i\bar a}\Big) + B |\nabla u|^2,
\end{align*}
where as usual we sum over repeated indices and we used the observation that $\sum_i F^{i\bar i} \geq 1.$ Crucially, the positive holomorphic bisectional curvature was used to get the last inequality.  Next, differentiating the equation we have that $F_\ep^{i\bar j}u_{i\bar j a} = q_a,$ and so setting $\la_\ep = \ep/n$ we have 
\begin{align*}
Lw &\geq q_au_{\bar a}+ q_{\bar a}u_a + F_\ep^{i\bar j}\Big(u_{ia}u_{\bar j\bar a}+u_{a\bar j}u_{i\bar a}\Big) + B|\nabla u|^2\\
&\geq q_au_{\bar a}+ q_{\bar a}u_a + \la_\ep\sum_{i,a}|u_{ia}|^2 +\la_\ep\sum_{k}\mu_k^2 + Bw\\
&=  \la_\ep\sum_{i,a}|u_{ia}|^2 +\la_\ep\sum_{k}\mu_k^2 + (B + 2\delta)w + 2\Re(h_a u_{\bar a}) - 2\delta\Re((u_0)_au_{\bar a})
\end{align*} Naively, if $w$ were to take a maximum value at some point, the above inequality would immediately imply an upper bound on $w$. Since in principle the maximum of $w$ could drift off to infinity, we instead consider $U = \eta_Rw$, where $\eta_R = \chi_R^2$ and $\chi_R$ is the cut-off function from Lemma \ref{lem:cutoffs}. Note that $$\frac{|\nabla \eta|^2}{\eta} \leq \frac{C}{R^2} \text{ and } |dd^c\eta| \leq \frac{C}{R}.$$Then (suppressing the subscript $R$)
\begin{align*}
LU &\geq \eta\la_\ep\sum_{i,a}|u_{ia}|^2 +\eta\la_\ep\sum_{k}\mu_k^2 + (B + 2\delta)U + 2\eta\Re(h_a u_{\bar a}) - 2\eta\delta\Re((u_0)_au_{\bar a})\\
&+ 2F^{i\bar j}\Re(\eta_{i}w_{\bar j}) + F_\ep^{i\bar j}w\eta_{i\bar j}\\
&\geq \eta\la_\ep\sum_{i,a}|u_{ia}|^2 +\eta\la_\ep\sum_{k}\mu_k^2 + (B + 2\delta)U  - 2C \eta\sqrt{w} +2F_\ep^{i\bar j}\Re(\eta_{i}w_{\bar j})  - \frac{C}{R\la_\ep}w.
\end{align*}
Denoting by $Q = \sum_{i,a}|u_{ia}|^2 + \sum_{k}\mu_k^2,$ we control the second-last term by 
\begin{align*}
|2F_\ep^{i\bar j}\Re(\eta_{i}w_{\bar j})| \leq C\la_\ep^{-1}|\nabla \eta| \sqrt{w}\sqrt{Q} \leq \frac{\eta\la_\ep}{2}Q + \frac{C|\nabla\eta|^2}{2\la_\ep^3\eta}w \leq \frac{\eta\la_\ep}{2}Q + \frac{C}{2\la_\ep^3 R^2}w,
\end{align*} and so $$LU \geq  - 2C \eta\sqrt{w}  +(B + 2\delta)U - \frac{C}{R}w,$$ where in the last term in the last line, the constant is of course allowed to depend on $\ep$ but not $R$. At this stage, if the final negative term had a $\eta w$ instead of just a $w$ we could have absorbed it in the second term by choosing $R$ large and an application of the maximum principle would have shown that $w$ is bounded. Instead, we use $B>0$ to simply drop the second term and write $$LU \geq - 2C \eta\sqrt{w} - \frac{C}{R}w.$$ Now we need some more positivity to absorb the terms with $-w$. So consider the function $$V = U + \psi(v),$$ where we set $v = u - u_0$ and $\psi$ will chosen shortly and satisfies $$\psi ' < 0 \text{ and }\psi'' \geq c>0$$ on the range of $v$. We now compute:  
\begin{align*}
LV &= LU + L\psi(v)\\
 &= LU + \psi'(v) Lv + \psi''(v)F_\ep^{i\bar j}v_iv_{\bar j}\\
 &\geq LU +  \psi'(v) Lv + c\ep |\nabla v|^2.
\end{align*} 
The key point now is to use concavity of $F_\ep$. Indeed, we have $$F_\ep(H(u_0)) \leq F_\ep(H(u)) + F_\ep^{i\bar j} ((u_0)_{i\bar j} - u_{i\bar j}),$$ and so $$Lv \leq F_\ep(H(u)) - F_\ep(H(u_0)) = h + \delta v - F_\ep(H(u_0)):= G.$$ Since $\psi'<0$ we then have that $$\psi'(v)Lv \geq \psi'(v)G.$$For the third term in the lower bound on $LV$, by AM-GM inequality we have $$|\nabla v|^2 \geq \frac{1}{2}w -C $$ using the fact that $|\nabla u_0|$ is uniformly bounded. Combined with the earlier estimate on $LU$ we have $$LV \geq - 2C \eta\sqrt{w} - \frac{C}{R}w +\psi'(v)G +  \frac{c\ep}{2}w - C.$$ Now we choose $R>>1$ such that $$LV \geq - 2C \eta\sqrt{w}  + \frac{c\ep}{4}w + \psi'(v)G - C.$$ At this point, we now pick $$\psi(x) = (M-x)^2,$$ where $M = \sup v + 1$. Then $\psi$ satisfies both the required properties (with $c  =2$). Note that $\psi'(x) = -2(M-x)$ and so $\psi'(v)<0$ and $\psi''(v) \geq 2$. Also $G$ has a uniform upper bound, and so in the end we have $$LV \geq - 2C \eta\sqrt{w}  + \frac{c\ep}{4}w - C.$$

We now consider the maximum of $V$ on $B(0,2R)$. If the maximum occurs on the boundary, $\eta_R$ is zero there and so, $$V =  \psi(v) \leq \sup_{X}\psi(v) \leq C_{\ep,\delta}.$$ Crucially the constant does not depend on $R$. On the other hand, if the maximum is attained on the interior of $B(o,2R)$, say at a point $p$, then by the maximum principle $$0 \geq - 2C \eta\sqrt{w}  + \frac{c\ep}{4}w - C,$$ and so $w(p) \leq C$ for some constant $C>0$ independent of $R$. But then in particular this implies that $$\sup_{B(o,R)}w \leq C$$ for some constant independent of $R$. This proves the claim \eqref{eq:claim-bounded-grad}. 

We now prove that in fact $w\leq 1$. In fact we claim that there exists a constant $C_\ep>0$ such that for any $\al>0$ we have 
\begin{equation}\label{eq:w-al}
w(x) \leq 1 + \al\Big(\rho(x) + \frac{C_\ep}{\delta}\Big).
\end{equation} We then let $\al\rightarrow 0$. To prove the claim, we consider the function $W = w - \al\rho$, where note that we have normalized so that $\rho\geq 0$. Since $\rho$ is an exhaustion, this certainly takes the maximum at some point, say at $p$. Then at $p$, $$Lw(p) \leq \al L\rho(p) \leq \al C_\ep$$If $w(p) \geq 1$, by our calculations earlier, and using the fact that $|\nabla h| \leq B?2$ and $|\nabla u_0| \leq 1$ we have 
\begin{align*}
Lw&\geq (B+2\delta)(w - \sqrt{w})(p)\\
&\geq \delta(w(p)-1),
\end{align*}
and so $$w(p) \leq 1 + \al\frac{C_\ep}{\delta},$$ and so this proves \eqref{eq:w-al}. If $w(p) \leq 1$ the same upper bound is automatic. 
\end{proof}

We now let $\varphi_{\ep,\delta} = u_{\ep,\delta} - u_{\ep,\delta}(o).$ Then by the Arzela-Ascoli theorem, as $\delta\rightarrow 0^+$ via a sequence, after passing to a diagonal subsequence, $\varphi_{\ep,\delta}$ converge locally uniformly to Lipschitz function $\varphi_\ep$ with $\Lip(\varphi_\ep)\leq 1.$ Next, we have the following basic interior estimate (cf. \cite[Theorem 3.1]{T}, \cite{Caf}, \cite{tos}): $$\Vert \phi_{\ep,\delta}\Vert_{C^{2,\al}(K)}\leq C\Big(\Vert \varphi_{\ep,\delta}\Vert_{L^\infty(K')} + \Vert h + \delta(u_{\ep,\delta} - u_0)\Vert_{C^{0,\al}(K')}\Big).$$ The right-hand side is uniformly bounded. Hence, $\varphi_\ep$ is a smooth solution to $$F_\ep(H(\varphi_\ep)) - c_{\ep} = h$$ for some $c_\ep \in \RR$.\\
\indent We wish to prove that $c_{\ep}=O(\ep^2)$. To this end, we need a better estimate on $\delta (u_{\ep,\delta} - u_0)$:
\begin{lemma}\label{lem:betterestimate}
There exists a constant $C$ that is independent of $\delta,\ep$ such that $$\delta \Vert u_{\ep,\delta} - u_0 \Vert_{L^{\infty}}\leq C\sqrt{\delta}+C\ep^2.$$
\end{lemma}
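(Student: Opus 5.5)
The plan is to run the comparison argument of Lemma \ref{lem:comparison} again, with $u_0$ replaced by the better-behaved barrier $\zeta_a$ of Lemma \ref{lem:betterbarrier}. The gain should come from the ellipticity of $F_\ep$ at bounded Hessians, which is uniform in $\ep$.

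Fix $a>0$, to be chosen as a multiple of $\delta$. Write $v=u_{\ep,\delta,j}-u_0$ on $\overline{\Omega_j}$; limits $j\to\infty$ are handled exactly as before, and I need constants independent of $j$. For the lower bound on $v$, I would consider the auxiliary function $\tilde v = u - \zeta_a$ perturbed by $-\al\rho$. The perturbation guarantees an interior minimum, and we let $\al\to0$ at the end using $L\rho\le C_\ep$, exactly as in the proof of \eqref{eq:w-al}.

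At an interior minimum $p$ we have $H(u)\ge H(\zeta_a)$ there. However, $\zeta_a$ is not psh in general; we only know $\Delta\zeta_a = a(\zeta_a-u_0)=O(\sqrt a)$. For that reason I would rather compare with $u_0$ directly, and use $\zeta_a$ only to correct the zeroth-order term.

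Concretely, at a minimum of $u-u_0$ we get $H(u)\ge H(u_0)$, and $H(u_0)$ lies in the compact set $\{0< H(u_0)\le C_0 I\}$. The key point is that $F(H(u_0))=m\ge 2h$ by property (4) of the lemma constructing $f$. Now use the uniform approximation $|F_\ep-\widehat F_\ep|\le C_n\ep^2$ together with $\widehat F_\ep\ge F$ (the minimum is over a smaller set). This gives
\begin{equation*}
h+\delta v(p)=F_\ep(H(u))(p)\ge F_\ep(H(u_0))(p)\ge m(p)-C\ep^2\ge h(p)-C\ep^2 ,
\end{equation*}
so $\delta v\ge -C\ep^2$ with $C$ independent of $\ep,\delta$. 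This yields the lower bound.

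The upper bound is the main obstacle. At a maximum of $v$ we get $H(u)\le H(u_0)$, and now we need $F_\ep(H(u_0))\le h+C\ep^2+C\sqrt\delta$. This fails because $m\ge 2h$ points the wrong way. Here the $\sqrt\delta$ should enter. I would not use pointwise comparison at all; instead I would use the trace structure. By concavity, $F_\ep(H(u))\le F_\ep(H(u_0))+F_\ep^{i\bar j}(u_{i\bar j}-(u_0)_{i\bar j})$, but a better route is the inequality $F_\ep(A)\le \frac1n\Tr(A)+C\ep^2$, obtained by taking $B=I\in\sB_\ep$. This gives
\begin{equation*}
n h+n\delta v\le \Delta u + Cn\ep^2 ,
\end{equation*}
which is a linear differential inequality. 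Subtracting $\Delta\zeta_a=a(\zeta_a-u_0)$ with $a=n\delta$, the function $\tilde v=u-\zeta_a$ satisfies
\begin{equation*}
\Delta\tilde v \ge n\delta\big(v-(\zeta_a-u_0)\big) - Cn\ep^2 = n\delta\tilde v - Cn\ep^2 ,
\end{equation*}
using $h>0$. At an interior maximum of $\tilde v-\al\rho$ we then get $n\delta\tilde v\le Cn\ep^2+\al C$. Combining this with $|\zeta_a-u_0|\le C_n/\sqrt{n\delta}$ gives
\begin{equation*}
\delta v\le C\ep^2+\delta\cdot\frac{C}{\sqrt\delta}=C\ep^2+C\sqrt\delta .
\end{equation*}

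The delicate point is the boundary. On $\partial\Omega_j$ we have $\tilde v=u_0-\zeta_a=O(\delta^{-1/2})$, which is harmless since it contributes $\delta\cdot O(\delta^{-1/2})=O(\sqrt\delta)$. Alternatively, one works directly on $X$, using the a priori bound of Lemma \ref{lem:comparison} and the $\al\rho$ penalization to guarantee that a maximum exists. I expect verifying this penalization step, with $L\rho$ replaced by $\Delta\rho\le n$, to be routine.
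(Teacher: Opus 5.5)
Your proposal is correct and follows essentially the paper's argument. The lower bound comes from comparing with $u_0$ at a minimum via $F_\ep(H(u_0))\ge m-C\ep^2\ge h-C\ep^2$; the paper phrases this with the shifted subsolution $u_0-C\ep^2/\delta$. The upper bound comes from the barrier $\zeta_{n\delta}$ of Lemma \ref{lem:betterbarrier} and the trace bound obtained by taking $B=I$, with one cosmetic difference. You apply $F_\ep\le\frac1n\Tr+C\ep^2$ to $H(u)$ and use the linear maximum principle, whereas the paper applies $F_\ep\le\widehat F_\ep\le\frac1n\Tr$ (concavity of $\widehat F_\ep$ plus the radial mollifier) to $H(\zeta_{n\delta})$, so its upper bound has no $\ep^2$ term.
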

\begin{proof}
It suffices to prove that 
$$\delta \Vert u_{\ep,\delta,j} - u_0 \Vert_{L^{\infty}}\leq C\sqrt{\delta}+C\ep^2$$
on $\bar{\Omega}_j$ for $C$ independent of $j,\delta,\ep$. Note that by choosing $B= I$, $$ F_{\ep}(A)\leq \hat{F}_{\ep}(A)\leq \frac{1}{n}\Tr(A). $$
Thus the better barrier $\zeta_{n\delta}$ produced by Lemma \ref{lem:betterbarrier} satisfies
$$ F_{\ep}(H(\zeta_{n\delta}))-\delta(\zeta_{n\delta}-u_0)\leq 0<F_{\ep}(H(u_{\ep,\delta,j}))-\delta(u_{\ep,\delta,j}-u_0).$$
If the maximum of $v=u_{\ep,\delta,j}-\zeta_{n\delta}$ occurs at an interior point, then $v\leq 0$ throughout. If it is attained on the boundary of $\Omega_j$, then $$v\leq \Vert \zeta_{n\delta}- u_0\Vert_{L^{\infty}}\leq \frac{C}{\sqrt{\delta}} .$$ Thus $$u_{\ep,\delta}-\zeta_{n\delta}\leq\frac{C}{\sqrt{\delta}}.$$
Now we note that $$F_{\ep}(H(u_0))\geq m-C\ep^2\geq h-C\ep^2.$$
Thus, setting $\tilde{u}_0=u_0-\frac{C\ep^2}{\delta}$, and comparing with $u_{\ep,\delta,j}$ as above, we get the desired lower bound.
\end{proof}
Using Lemma \ref{lem:betterestimate} we see that $c_{\ep}\rightarrow 0$ as $\ep\rightarrow 0$. 
Moreover we have $\varphi_\ep(o) = 0$ and $\vert\nabla \varphi_\ep\vert \leq 1$. Then once again applying Arzela-Ascoli, there is a sequence $\ep_k\rightarrow 0$ such that $\varphi_k:= \varphi_{\ep_k}$ converge uniformly on compact sets to a Lipschitz continuous function $\phi$ with $\Lip(\phi) \leq 1$. 

\begin{lemma}
$\phi$ is psh and solves $$(dd^c\phi)^n = e^{-f}\omega^n$$ in the Bedford-Taylor sense. 
\end{lemma}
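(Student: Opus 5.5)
Our plan is to pass to the limit $\ep_k\to 0$ in the equations $F_{\ep_k}(H(\varphi_k)) = h + c_{\ep_k}$, where $c_{\ep_k}\to 0$, working in the viscosity framework on coordinate balls. We then convert the viscosity statements into Bedford--Taylor statements via the known equivalence for continuous functions: a continuous psh viscosity solution of $(dd^c\phi)^n = e^{-f}\omega^n$ with continuous positive right-hand side is a pluripotential solution (Eyssidieux--Guedj--Zeriahi). Since $\phi$ is Lipschitz, hence continuous, this framework applies.

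\textbf{Step 1 (psh).} Let $P$ be a $C^2$ function touching $\phi$ from above at $x_0$, and perturb to make the contact strict. By uniform convergence there are points $x_k\to x_0$ and constants $a_k$ such that $P+a_k$ touches $\varphi_k$ from above at $x_k$. Then $H(\varphi_k)(x_k)\le H(P)(x_k)$, so by ellipticity
\[ h(x_k)+c_{\ep_k} = F_{\ep_k}(H(\varphi_k)(x_k)) \le F_{\ep_k}(H(P)(x_k)). \]
The left side is bounded below by $\tfrac12 h(x_0)>0$ for large $k$. If $\la_{min}(H(P)(x_0))<0$, part (2) of the second lemma (with $A_k=H(P)(x_k)\to H(P)(x_0)$) forces the right side to $-\infty$, a contradiction. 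Hence $H(P)(x_0)\ge0$ for every upper test function, so $\phi$ is a viscosity subsolution of $-\la_{min}(H)\le 0$, which for continuous functions is equivalent to psh. For $n=1$ the argument is simpler, since $F_\ep$ is then essentially linear.

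\textbf{Step 2 (subsolution and supersolution).} At the same touching points we know $H(P)(x_0)\ge0$, and $h(x_k)+c_{\ep_k}\le F_{\ep_k}(H(P)(x_k))$. We use the bound $F_\ep\le \widehat F_\ep+C\ep^2$ and $\widehat F_\ep\le F$ on $\Herm^+$ (an infimum over a smaller set dominates, and concavity handles the semidefinite boundary). Passing to the limit gives $F(H(P)(x_0))\ge h(x_0)$, i.e.\ $\det H(P)\ge e^{-f}$, so $\phi$ is a viscosity subsolution.

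For the supersolution property, take $P$ touching from below. Adding $\eta|x-x_0|^2$ for small $\eta$, we may assume $H(P)(x_0)>0$ in the case where it is positive semidefinite. Pick a compact $K\subset\Herm^+$ containing $H(P)(x_k)$ for large $k$. The uniform approximation $\|F_\ep-F\|_{L^\infty(K)}\le C\ep^2$ then gives $F(H(P)(x_0))\le h(x_0)$. If $H(P)(x_0)$ is not positive semidefinite, the supersolution inequality is vacuous under the standard convention, which tests only with $H(P)\ge0$.

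\textbf{Step 3 and the main obstacle.} The main obstacle is the passage from viscosity to Bedford--Taylor, together with the degenerate cases in Step 2. There the test Hessians approach $\partial\Herm^+$, where $F_\ep$ and $F$ are not uniformly close (the cutoff $\ep_K$ blows up). We handle these cases by the quadratic perturbation argument and by the monotone one-sided bounds $F_\ep\le F+C\ep^2$, which hold globally.

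Once $\phi$ is a continuous psh viscosity solution, we invoke the local equivalence theorem on small balls, with $\omega^n$ written as a smooth volume form. This yields $(dd^c\phi)^n=e^{-f}\omega^n$ as measures, which is the claimed statement. An alternative route avoids viscosity theory: write $\varphi_k$ locally as sums of psh functions and smooth errors, and use the Bedford--Taylor continuity of Monge--Amp\`ere under uniform convergence. That approach, however, requires plurisubharmonicity of $\varphi_k$ itself, which fails for $F_\ep$, so we prefer the viscosity argument above.
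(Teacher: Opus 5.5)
Your route is viable and differs from the paper's. You pass viscosity sub- and supersolution properties to the limit along $F_{\ep_k}$ and then quote the Eyssidieux--Guedj--Zeriahi viscosity/pluripotential equivalence. Step 1 is correct, and it is where part (2) of the second lemma gets used; the paper's own proof never needs that part.

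\textbf{The error in Step 2.} The subsolution half of Step 2 rests on an inequality that goes the wrong way. Since $\sB_\ep\subset\sB$, the truncated infimum is \emph{larger}: $\widehat F_\ep\ge F$ on positive semidefinite matrices. Consequently the bound $F_\ep\le F+C\ep^2$ that you call global in Step 3 is false for $n\ge2$. Take $A=\diag(1,\dots,1,\tau)$, and let $B'$ be the upper-left $(n-1)\times(n-1)$ block of some $B\in\sB_\ep$. Then $1=\det B\le B_{n\bar n}\det B'\le\ep^{-1}\det B'$. Hence $\Tr(AB)\ge\Tr B'\ge(n-1)\ep^{1/(n-1)}$ and $\widehat F_\ep(A)\ge\frac{n-1}{n}\ep^{1/(n-1)}$ for every $\tau\ge0$, while $F(A)=\tau^{1/n}\to0$. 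The one-sided bound that does hold globally is $F_\ep\ge F-C\ep^2$ on positive semidefinite matrices, which is the direction your supersolution step uses.

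\textbf{The fix.} The subsolution step needs only upper semicontinuity along the sequence. Fix $B\in\sB$; then $B\in\sB_{\ep_k}$ for $k$ large. So $F_{\ep_k}(A_k)\le\frac1n\Tr(A_kB)+C\ep_k^2\to\frac1n\Tr(A_0B)$. Taking the infimum over $B$ gives $\limsup_kF_{\ep_k}(A_k)\le F(A_0)$, and hence $\det H(P)(x_0)\ge e^{-f(x_0)}$.

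\textbf{A second slip.} In the supersolution step, adding $\eta|x-x_0|^2$ to a function touching $\phi$ from below destroys the touching. No perturbation is needed, however. If $H(P)(x_0)\ge0$ is singular, the required inequality is just $0\le e^{-f(x_0)}$. With these two repairs your argument goes through.

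\textbf{How the paper argues instead.} The paper never approaches $\partial\Herm^+$.
\begin{enumerate}
\item On a coordinate ball $B$ it solves the exact Dirichlet problem $(dd^cv_m)^n=e^{-f}\omega^n$, with $v_m=\phi_m$ on $\partial B$ and smooth $\phi_m\to\phi$ uniformly.
\item Each $v_m$ is smooth and strictly psh up to the boundary, so its Hessians stay in a compact subset of the open cone, where $|F_\ep-F|\le C\ep^2$.
\item It uses barriers $v_m\pm a_\ep q\mp b_{m,\ep}$ with $q=|z|^2-1$, together with $F_\ep(A+tI)\ge F_\ep(A)+t$ and the comparison principle for the uniformly elliptic $F_\ep$. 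These give $\Vert\varphi_\ep-v_m\Vert_{L^\infty(B)}\le a_\ep+b_{m,\ep}$, hence $\Vert\phi-v_m\Vert_{L^\infty(B)}\le\Vert\phi-\phi_m\Vert_{L^\infty(\partial B)}$.
\item So $\phi$ is a uniform limit of smooth psh solutions. Plurisubharmonicity and the Bedford--Taylor equation then follow from continuity of the Monge--Amp\`ere operator under uniform convergence.
\end{enumerate}

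\textbf{Comparison.} In effect the paper runs the proof of the equivalence theorem directly on the approximating equations. It is therefore self-contained modulo smooth Dirichlet theory and Bedford--Taylor. Contrary to your closing remark, avoiding viscosity theory does not require the $\varphi_k$ to be psh; only the comparison functions $v_m$ need to be. Your version is more modular and makes the roles of sub- and supersolutions transparent. The price is quoting the equivalence theorem and having to control $F_\ep$ near the degenerate boundary by hand, which is exactly where the slip occurred.
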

Combined with the integral estimate on $e^{-f}$, this proves Theorem \ref{thm:MA}. The proof of this lemma is inspired from \cite{Zer13}.
\begin{proof}
In the proof, we suppress the subscript $k$ on $\ep_k$. So for instance when we say let $\ep\rightarrow 0$ we actually mean that $\ep_k\rightarrow 0$. Since the claim is local, we fix a coordinate unit ball $B$. Then $\varphi_k\rightarrow \phi$ uniformly on $\overline{B}$. Choose smooth boundary data $\phi_m$ on $\partial B$ which converge uniformly to $\phi$ on $\partial B$. Let $v_m$ be a soluion to the Dirichlet problem $$\begin{cases}(dd^cv_m)^n = e^{-f}\omega^n\quad\text{ on $B$}\\ v_m = \phi_m\quad\text{ on $\partial B$}.  \end{cases}$$ We claim that for each fixed $m$,
\begin{equation}\label{eq:final-est}
\Vert \phi - v_m\Vert_{L^\infty(B)} \leq \Vert \phi - \phi_m\Vert_{L^\infty(\partial B)}.
\end{equation} Assuming this for now we immediately get that $v_m\rightarrow \phi$ uniformly on $B$. In particular $\phi$ is psh and also solves $$(dd^c\phi)^n = e^{-f}\omega^n.$$ In order to prove the claim, we first note that since $v_m$ is strictly psh and smooth on $\overline{B}$, $$K_m:= \{H(v_m)(x)~|~x\in \overline{B}\}$$ is compact in $\Herm^+(n,\CC).$ For a fixed $m$, there exists $\ep_m>0$ such that for all $\ep<\ep_m$ we have $$\sup_{A\in K_m}|F_\ep(A) - F(A)| \leq C\ep^2,$$ where recall that $C$ is independent of $K_m$ and hence $m$. Now, $F(H(v_m)) = h$ and so $$|F_\ep(H(v_m)) - h| \leq r_\ep,\quad r_{\ep}:= C\ep^2.$$ Recall that $\varphi_\ep$ solves $F(H(\varphi)) = h + c_\ep.$ Consider the ``test function" $q(z) = |z|^2 - 1$. This satisfies $$-1\leq q\leq 0 \text{ on $ B$},~q\equiv 0 \text{ on $\partial B$ and }H(q) \geq  \mu_BI,$$ where $\mu_B>0$ is some fixed constant depending only on $\omega\Big|_{\overline{B}}$. We set $$a_\ep:= \frac{r_\ep + |c_\ep|}{\mu_B} \text{ and }b_{m,\ep}:= \Vert \varphi_\ep - \phi_m\Vert_{L^\infty(\partial B)},$$ and consider the lower and upper barriers defined by $$\uv_{\ep,m} := v_m + a_\ep q - b_{m,\ep}\quad\uv_{\ep,m} := v_m - a_\ep q + b_{m,\ep}, \text{ where }.$$ Since $H(q) \geq 0$. it follows from \eqref{eq:finite-diff-F-ep} that $$F_\ep(H(\uv_{m,\ep})) \geq  F_\ep(\varphi_\ep)\geq F_\ep(H(\ov_{m,\ep})).$$ Indeed, by \eqref{eq:finite-diff-F-ep} 
\begin{align*}
F_\ep(H(\uv_{m,\ep})) &\geq F_\ep(H(v_m)) + a_\ep\mu_B \\
&\geq h - r_\ep + a_\ep\mu_B\\
&\geq  h + c_\ep = F_\ep(\varphi_\ep).
\end{align*}
The other estimate is proven similarly. On the boundary $\partial B$ we have $$\uv_{m,\ep} \leq \phi_\ep \leq \ov_{m,\ep}.$$  Then by the standard comparison principle (see for instance the argument in the proof of Lemma \ref{lem:comparison}) we have that $$\uv_{m,\ep} \leq \phi_\ep \leq \ov_{m,\ep}$$  on all of $B$. Since $|q| \leq 1$ this implies that $$\Vert\varphi_\ep - v_m\Vert_{L^\infty(B)} \leq a_\ep + b_{m,\ep}.$$ Fixing $m$ and letting $\ep \rightarrow 0$ proves the claim \eqref{eq:final-est}, since $a_\ep\rightarrow 0$.
\end{proof}

\end{document}